\documentclass[reqno,12pt]{amsart}
\usepackage{amsmath, latexsym, amsfonts, amssymb, amsthm}

\usepackage{fig4tex}

\setlength{\oddsidemargin}{5mm}
\setlength{\evensidemargin}{5mm}
\setlength{\textwidth}{150mm}
\setlength{\headheight}{0mm}
\setlength{\headsep}{12mm}
\setlength{\topmargin}{0mm}
\setlength{\textheight}{220mm}
\setcounter{secnumdepth}{2}

\numberwithin{equation}{section}

\newtheorem{theorem}{Theorem}[section]
\newtheorem{lemma}[theorem]{Lemma}
\newtheorem{prop}[theorem]{Proposition}

\newtheorem{definition}[theorem]{Definition}
\newtheorem{example}[theorem]{Example}



\newcommand{\gep}{\varepsilon}       

\newcommand{\cF}{{\ensuremath{\mathcal F}} }
\newcommand{\cP}{{\ensuremath{\mathcal P}} }

\newcommand{\bbE}{{\ensuremath{\mathbb E}} }
\newcommand{\E}{{\ensuremath{\mathbb E}} }

\newcommand{\N}{{\ensuremath{\mathbb N}} }

\newcommand{\bbP}{{\ensuremath{\mathbb P}} }

\newcommand{\R}{{\ensuremath{\mathbb R}} }

\newcommand{\bbT}{{\ensuremath{\mathbb T}} }

\newcommand\I{\operatorname{I}}
\newcommand\J{\operatorname{J}}
\renewcommand\H{\operatorname{H}}

\newcommand{\mc}[1]{{\mathcal #1}}

\newcommand{\mb}[1]{{\mathbf #1}}
\newcommand{\bb}[1]{{\mathbb #1}}
\newcommand{\eps}{\varepsilon}

\newfont{\indic}{bbmss12}

\def\un#1{\hbox{{\indic 1}$_{#1}$}}

\title[Large deviations for a random speed particle] {Large deviations
  for a random speed particle}

\author[R.\ Lefevere]{Rapha\"el Lefevere}
 \address{Laboratoire de Probabilit\'es
  et Mod\`eles Al\'eatoires (CNRS UMR 7599), Universit\'e Paris 7
  -- Denis Diderot, UFR Math\'ematiques, Case 7012, B\^atiment
  Chevaleret, 75205 Paris Cedex 13, France}
\email{lefevere\@@math.jussieu.fr}

\author[M.\ Mariani]{Mauro Mariani}
\address{Laboratoire d'Analyse,
Topologie, Probabilit\'es (CNRS UMR 6632), Universit\'e Paul
C\'ezanne Aix-Marseille 3, Facult\'e des Sciences et Techniques
Saint-J\'er\^ome, Avenue Escadqille Normandie-Niemen 13397 Marseille
Cedex 20, France}
\email{mariani\@@cmi.univ-mrs.fr}

 \author[L. Zambotti]{Lorenzo Zambotti}
 \address{Laboratoire de Probabilit{\'e}s
   et Mod\`eles Al\'eatoires (CNRS UMR. 7599)
 Universit\'e Paris 6 -- Pierre et Marie Curie,
U.F.R. Math\'ematiques, Case 188, 4 place
   Jussieu, 75252 Paris cedex 05, France }
 \email{lorenzo.zambotti\@@upmc.fr}

\date{\today}

\begin{document}

\begin{abstract}
  We investigate large deviations for the empirical measure of the position and
  momentum of a particle traveling in a box with hot walls. The particle travels with
  uniform speed from left to right, until it  hits
  the right boundary. Then it is absorbed and re-emitted from the left boundary with
  a new random speed, taken from an i.i.d.\ sequence. It turns out that this simple model, often used to simulate a heat bath,
  displays unusually complex large deviations features, that we explain in detail. In particular,
  if the tail of the update distribution of the speed is sufficiently oscillating, then the empirical 
  measure does not satisfy a large deviations principle, and we exhibit optimal lower and upper large
  deviations functionals.
\end{abstract}

\keywords{Large Deviations; Renewal Process; Heavy Tails}

\subjclass[2000]{60F10, 60K05}

\maketitle

\section{Introduction}
We consider the motion of a particle in a box $[0,1[$. The particle moves with uniform velocity $v_1$ from left to right, until it reaches $1$ and it is instantaneously absorbed and re-emitted at $0$ with a new speed $v_2$. Then the particles travels again through the box with constant speed, and so on. If the sequence $(v_i)_{i \ge 1}$ is i.i.d.\, the stochastic motion we have described is Markovian and arises naturally in the simulation of a heat bath \cite{EckmannYoung, EckmannYoung2,Larralde,LinYoung,Mejia}. In this paper our main goal is to study the large deviations of the law of the empirical measure of the canonical coordinates $(q_t,p_t)$ representing position and momentum of this process.  In spite of the simple description enjoyed by the Markov process $(q_t,p_t)$, it features unusual large deviations properties.

\vspace{5mm}
\figinit{0.3mm}

\figpt 1:(-80,0)
\figpt 2:(80,0)

\figpt 3:(-80,20)
\figpt 4:(-80,-20)
\figpt 5:(-100,-20)
\figpt 6:(-100,20)

\figpt 7:(80,20)
\figpt 8:(80,-20)
\figpt 9:(100,-20)
\figpt 10:(100,20)

\figpt 11:(-20,5)
\figpt 12:(0,5)

\figpt 13:(-90,0)
\figpt 14:(90,0)

\psbeginfig{}

\psline[1,2]
\psline[3,4,5,6,3]
\psline[7,8,9,10,7]
\pscirc 11(5)
\psset arrowhead(fillmode=yes) \psarrow[11,12]
\psendfig
\figvisu{\figBoxA}{}
{
\figwriten 13:$0$(1)
\figwriten 14:$1$(1)
\figsetmark{$\figBullet$}
}
\centerline{\box\figBoxA}

\subsection{A non-standard large deviations principle}
A wide literature deals with large deviations of the
empirical measure of Markov processes, after the seminal work of
Donsker and Varadhan \cite{DV}. However, neither their theory or its extensions can be applied in this case, nor they would provide the right result. On one hand, we prove that even the existence of a large deviations principle can fail for certain choices of the marginal law of the i.i.d.\ sequence $(v_i)_i$. On the other hand, when large deviations exist, the associated rate functional can differ from the related Donsker-Varadhan functional. The main point is that, if the random variables $\exp(c/v_i)$ have infinite expectation for some $c>0$, then the probability for the particle to assume a slow velocity of order $t^{-1}$ before time $t$ is not negligible at the large deviations level as $t \to +\infty$. Thus, when looking at events of exponentially small probability, the empirical measure may show features quite far from its typical behavior, and in particular it may concentrate on measures which are singular with respect to the invariant measure of $(q_t,p_t)$ (we recall that this cannot happen if the correct large deviations functional coincides with the Donsker-Varadhan one). 

Another approach to study the large deviations of the empirical measure of the process, would be to use the inversion map for processes depending on an underlying renewal process, see \cite{dm1}. 
However this method is effective only if the sequence $1/v_i$ of times of return to $0$ is bounded, and indeed in the general case one obtains with this approach the wrong rate functional.

In other words, the presence of long tails in the distribution of the return time $1/v_i$ leads standard approaches to fail, and requires a specific analysis. The heavy tails phenomenon induces a slow convergence to the invariant measure (when it exists), and results of the Donsker-Varadhan type are not allowed. It also induces a lack of regularity of the inversion map, and thus renewal techniques cannot be applied directly as well.

\subsection{Setting and notation}
At time $t=0$ the particle is at position $q_0\in[0,1[$ with speed $p_0>0$,
so that the time of the first collision with the wall at $1$ is
\[
T_0=T_0(q_0,p_0):=\frac{1-q_0}{p_0}.
\]
We consider an i.i.d.\ sequence $(v_i)_{i=1,2,\ldots}$ such that $v_i>0$ a.s. for all $i$.
When the particle reaches $1$ for the $i$-th time, it is re-emitted from $0$ with speed $v_i$.
The time to reach $1$ again is then $\tau_i:=1/v_i$.
We denote the law of $\tau_i$ by $\psi(d\tau)$ and the law of $v_i=1/\tau_i$
by $\phi(dv)$.  

Let us then consider the classical delayed renewal process associated with $(\tau_n)_{n\geq 1}$
\[
T_n:=T_0+\tau_1+\cdots+\tau_n, \qquad n\geq 0.
\]
The right-continuous process $(q_t,p_t)_{t\geq 0}$ is now defined by
\begin{equation}
\label{qp}
\begin{split}
(q_t,p_t) & = F(q_0,p_0,t,(\tau_n)_{n\geq 1}) :=
\\
& \quad \begin{cases}
(q_0+p_0t,p_0) & \text{if $t<T_0$,}
\\
\big( \frac{t-T_{n-1}}{\tau_n},\frac 1{\tau_n} \big) &\text{if $T_{n-1}\leq t< T_n$, for some $n\geq 1$.}
\end{cases}
\end{split}
\end{equation}
Next we define the empirical measure of the process $(q_t,p_t)_{t\geq 0}$ as
\[
\mu_t:= \frac 1t \int_{[0,t[}\delta_{(q_s,p_s)}\, ds
\in \mc P([0,1[\times \R_+)
\]
where, for $X$ a metric space, we denote by $\mc P(X)$ the set of Borel probability
measures on $X$, equipped with its narrow (weak) topology.

We first state some basic properties of the process $(q_t,p_t)_{t\geq 0}$ to be proved in section~\ref{mmpp}. Next, we introduce our main results, concerning large deviations principles for the law ${\bf P}_t^{(q_0,p_0)}$ of $\mu_t$, when the set $\mc P([0,1[\times\R_+)$ is equipped with its weak topology.

\subsection{Basic properties}
We define the family of operators 
\[
P_tf(q,p):=\E(f(F(q,p,t,(\tau_{n})_{n}))),
\qquad (q,p)\in [0,1[\times \R_+,
\]
for all bounded Borel function $f:[0,1[\times \R_+\mapsto\R$, where $F$ is defined in \eqref{qp}. The following result is proved in section~\ref{mmpp} below, where Markov properties are more widely discussed.

\begin{prop}
\label{markov}
The process $(q_t,p_t)_{t\geq 0}$ is Markov and $(P_t)_{t\geq 0}$
has the semigroup property: $P_{t+s}=P_tP_s$, $t,s\geq 0$.
\end{prop}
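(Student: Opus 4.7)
The plan is to exploit the fact that the pair $(q_s,p_s)$ carries enough information to reconstruct the age and residual lifetime of the current renewal interval, which is the crux of the Markov property for a piecewise-deterministic process of this kind.

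First I would record the deterministic inversion: if $T_{n-1}\le s<T_n$, then from the definition \eqref{qp} one reads off $p_s=1/\tau_n$ and $q_s=(s-T_{n-1})/\tau_n$, so that the residual time to the next collision at $1$ is
\[
T_n-s=\tau_n-(s-T_{n-1})=\frac{1}{p_s}-\frac{q_s}{p_s}=\frac{1-q_s}{p_s},
\]
which is exactly $T_0(q_s,p_s)$. In other words, the post-$s$ dynamics until the next renewal is a deterministic function of $(q_s,p_s)$ alone, and the subsequent renewals use $\tau_{n+1},\tau_{n+2},\ldots$, which are independent of $\cF_s:=\sigma((q_u,p_u):u\le s)$ because $n$ is an $\cF_s$-stopping-time-measurable index (the event $\{T_{n-1}\le s<T_n\}$ is $\cF_s$-measurable).

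From here I would check the Markov property by showing that for every bounded Borel $f$ and $t\ge 0$,
\[
\E\bigl(f(q_{s+t},p_{s+t})\mid\cF_s\bigr)=P_tf(q_s,p_s)\qquad\text{a.s.}
\]
The left-hand side is computed by writing $(q_{s+t},p_{s+t})=F(q_s,p_s,t,(\tau'_k)_{k\ge 1})$ with the sequence
\[
\tau'_1:=T_n-s=\tfrac{1-q_s}{p_s},\qquad \tau'_k:=\tau_{n+k-1},\ k\ge 2,
\]
where $\tau'_1$ is $\sigma(q_s,p_s)$-measurable and $(\tau'_k)_{k\ge 2}$ is i.i.d. with law $\psi$ and independent of $\cF_s$. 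But the definition of $F$ only uses $\tau'_1$ through the hitting time $T_0(q_s,p_s)=(1-q_s)/p_s$, which coincides with $\tau'_1$. Hence the conditional distribution of $(q_{s+t},p_{s+t})$ given $\cF_s$ is the same as the distribution under $\E$ of $f(F(q,p,t,(\tau_k)_{k\ge 1}))$ evaluated at $(q,p)=(q_s,p_s)$, which is exactly $P_tf(q_s,p_s)$. Once this is established, the semigroup identity $P_{t+s}=P_tP_s$ follows from the standard tower-property computation $P_{t+s}f(q,p)=\E(f(q_{t+s},p_{t+s}))=\E(\E(f(q_{t+s},p_{t+s})\mid\cF_s))=\E(P_tf(q_s,p_s))=P_s(P_tf)(q,p)$.

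The only substantive step is the identification $\tau'_1=T_0(q_s,p_s)$ together with the independence of $(\tau'_k)_{k\ge 2}$ from $\cF_s$; this is where the i.i.d.\ structure of $(v_i)$ and the fact that speed is constant between collisions are both used. No measure-theoretic subtlety is expected beyond verifying that the renewal index $n$ at time $s$ is $\cF_s$-measurable, which is immediate since the $T_k$ are $\cF_s$-stopping times and $(q_t,p_t)$ has right-continuous paths with discontinuities exactly at $\{T_k\}_{k\ge 0}$.
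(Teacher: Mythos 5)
Your strategy matches the paper's: observe that $(q_s,p_s)$ encodes the residual lifetime via $T_n - s = T_0(q_s,p_s)$, shift the interarrival sequence past the renewal index at time $s$, and invoke independence of the shifted sequence from $\cF_s$ to obtain the Markov property. However, the formula $(q_{s+t},p_{s+t})=F(q_s,p_s,t,(\tau'_k)_{k\ge 1})$ with $\tau'_1:=T_n-s$ is wrong, and the sentence ``the definition of $F$ only uses $\tau'_1$ through the hitting time $T_0(q_s,p_s)$'' rests on a misreading of \eqref{qp}. There, the time to the \emph{first} collision is $T_0(q,p)=(1-q)/p$, a function of the state $(q,p)$ alone; the first element $\tau_1$ of the supplied sequence is the interarrival time \emph{after} the first collision, so that $T_1=T_0+\tau_1$. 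With your choice $\tau'_1=T_n-s$, the map $F(q_s,p_s,\,\cdot\,,(\tau'_k)_{k\ge 1})$ would place the second collision at $T_0(q_s,p_s)+\tau'_1=2(T_n-s)$ rather than $T_{n+1}-s$, and on the interval between those two times it would report speed $1/(T_n-s)$ instead of $1/\tau_{n+1}$. Moreover, with $\tau'_1$ deterministic given $(q_s,p_s)$, the sequence $(\tau'_k)_{k\ge1}$ is not i.i.d.\ $\psi$, so its conditional law does not reproduce $P_t f(q_s,p_s)=\E\big(f(F(q_s,p_s,t,(\tau_k)_{k\ge 1}))\big)$ as needed.

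The fix is to discard your $\tau'_1$ and use the shift $(\tau_{n+k})_{k\ge1}=(\tau_{n+1},\tau_{n+2},\ldots)$, which is exactly the paper's $(\tau_{k+n_s})_{k\ge1}$. Your observation $T_n - s = T_0(q_s,p_s)$ is precisely what makes $(q_{s+t},p_{s+t}) = F(q_s,p_s,t,(\tau_{n+k})_{k\ge1})$ hold: the state already carries the residual time to the next collision, and $F$ only needs to be fed the subsequent interarrivals. With that correction, your independence argument (which you justify by noting that $n=n_s$ is a stopping time for the $\tau$-filtration, whereas the paper verifies the same independence by a direct computation) and your tower-property derivation of the semigroup property both go through as written.
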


For any probability measure $\mu$ on $\R_+\times[0,1]$ such that
$\mu(p):=\int p\, \mu(dq,dp)\in\R^*_+$ let us set
\[
\tilde \mu(dq,dp):= \frac 1{\mu(p)}\, p\, \mu(dq,dp).
\]
For any $\pi=\pi(dp)\in\cP(\R_+)$ with $\pi(p):=\int p\, \pi(dp)\in\R^*_+$ we also set
\[
\tilde \pi(dp):= \frac 1{\pi(p)}\, p\, \pi(dp).
\]
and we denote
by $\bbP_{\pi}$ the law of an i.i.d.\ sequence $(v_i)_{i\geq 1}$ with
marginal distribution $\pi$, 
i.e.\
\begin{equation}\label{bbqphi}
\bbP_{\pi}:=\otimes_{i\in\N^*}\pi(dv_i).
\end{equation}

\begin{prop}\label{limit}
Let $\pi\in\cP(\R_+)$ with $\pi(p)\in\R^*_+$.
Under $\bbP_{\tilde \pi}$, $\mu_t\rightharpoonup dq\otimes\pi$ a.s. as $t\to+\infty$.
\end{prop}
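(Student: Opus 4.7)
The plan is to decompose the time integral of a test function $f$ into contributions from the individual trips of the particle, then combine the strong law of large numbers for an i.i.d.\ sequence indexed by trips with the elementary renewal theorem for the counting variable $N_t := \sup\{n \geq 0: T_n \leq t\}$.

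Fix $f \in C_b([0,1[ \times \R_+)$. On the $i$-th complete trip $[T_{i-1}, T_i)$ the particle moves linearly from $0$ to $1$ at constant speed $v_i$, so the change of variables $u = (s - T_{i-1})/\tau_i$ applied to \eqref{qp} gives
\[
\int_{T_{i-1}}^{T_i} f(q_s, p_s)\, ds \;=\; \tau_i \int_0^1 f(q, v_i)\, dq \;=:\; Y_i(f).
\]
Hence $(Y_i(f))_{i\ge 1}$ is i.i.d.\ under $\bbP_{\tilde \pi}$, being a function of $v_i$ alone. Using $\tilde \pi(dv) = v\,\pi(dv)/\pi(p)$ one computes
\[
\bbE_{\tilde \pi}[\tau_1] \;=\; \int \frac{1}{v}\,\tilde\pi(dv) \;=\; \frac{1}{\pi(p)}, \qquad \bbE_{\tilde \pi}[Y_1(f)] \;=\; \frac{1}{\pi(p)} \int f\, d(dq \otimes \pi),
\]
both finite since $f$ is bounded and $\pi(p) \in \R^*_+$. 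The SLLN then yields, $\bbP_{\tilde \pi}$-a.s., the convergence of $n^{-1}\sum_{i=1}^n Y_i(f)$ and of $T_n/n$ to these respective means, and the elementary renewal theorem gives $N_t/t \to \pi(p)$ a.s.

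It remains to control the boundary terms in
\[
t\, \mu_t(f) \;=\; \int_0^{T_0} f(q_s, p_s)\, ds \;+\; \sum_{i=1}^{N_t} Y_i(f) \;+\; \int_{T_{N_t}}^{t} f(q_s, p_s)\, ds.
\]
The first term is bounded by $T_0 \|f\|_\infty$, hence $O(1/t)$. The third is bounded by $\tau_{N_t+1}\|f\|_\infty$; the integrability $\bbE_{\tilde\pi}[\tau_1] < \infty$ implies $\max_{i \leq n}\tau_i = o(n)$ a.s.\ (by Borel--Cantelli applied to $\{\tau_i > i\eps\}$), which combined with $N_t/t \to \pi(p)$ gives $\tau_{N_t+1}/t \to 0$ a.s. Collecting all the pieces, $\mu_t(f) \to (dq \otimes \pi)(f)$ $\bbP_{\tilde \pi}$-a.s. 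Applying this to a countable convergence-determining family $\{f_k\} \subset C_b([0,1[ \times \R_+)$ and taking a countable intersection of the corresponding full-measure events upgrades the result to a.s.\ narrow convergence $\mu_t \rightharpoonup dq \otimes \pi$.

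The main point, and the step that most sharply uses the standing hypothesis $\pi(p) \in \R^*_+$, is the control of the last incomplete trip $\tau_{N_t+1}/t \to 0$. If $\bbE_{\tilde\pi}[\tau_1]$ were infinite, the slowest trip among the first $N_t$ could have length comparable with $t$ itself, destroying the argument; this is precisely the obstruction that lies behind the unusual large deviations phenomena analyzed in the rest of the paper.
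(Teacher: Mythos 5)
Your proof is correct and takes essentially the same route as the paper: decompose $t\,\mu_t(f)$ into the initial segment $[0,T_0)$, the completed trips $[T_{i-1},T_i)$, and the final incomplete trip $[T_{N_t},t)$; apply the strong law of large numbers to the i.i.d.\ contributions $Y_i(f)=\tau_i\int_0^1 f(q,v_i)\,dq$; apply the elementary renewal theorem to $N_t/t$; and upgrade from a single $f$ to a countable convergence-determining family. The computations of $\bbE_{\tilde\pi}[\tau_1]=1/\pi(p)$ and $\bbE_{\tilde\pi}[Y_1(f)]=\pi(p)^{-1}(dq\otimes\pi)(f)$ are correct, and the reduction to full-measure events via a countable family is the right closing step.

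The one place where you diverge from the paper is the control of the last incomplete trip. You bound it by $\tau_{N_t+1}\|f\|_\infty$ and then invoke a separate Borel--Cantelli argument to get $\max_{i\le n}\tau_i=o(n)$ a.s. This is correct, but it is an unnecessary detour: you have already established $T_n/n\to 1/\pi(p)$ and $N_t/t\to\pi(p)$ a.s., from which
\[
\frac{T_{N_t}}{t}=\frac{T_{N_t}}{N_t}\cdot\frac{N_t}{t}\longrightarrow 1,
\qquad\text{hence}\qquad \frac{t-T_{N_t}}{t}\longrightarrow 0 \ \text{a.s.,}
\]
and the last trip contributes at most $(t-T_{N_t})\|f\|_\infty$. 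This is exactly what the paper does (with $S_{N_t}$ in place of $T_{N_t}$ in the undelayed reduction), and it avoids introducing a maximal inequality. Finally, a small caveat on your closing heuristic: the standing hypothesis $\pi(p)\in\R^*_+$ ensures $\bbE_{\tilde\pi}[\tau_1]<\infty$ and is needed for the LLN and renewal theorem, but the large deviations anomalies in the rest of the paper are governed by the tail of $\phi$ near $p=0$ (through $\xi$ and $\bar\xi$), which is a distinct issue from finiteness of $\pi(p)$ for the reference measure $\pi$ in this law of large numbers.
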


\subsection{Large deviations rates}
In this section we define the rate functionals $\I$ and $\bar\I$ for the large
deviations of $({\bf P}_t^{(q_0,p_0)})_{t>0}$, and some preliminary notation is needed. First, for convenience of the reader, we recall here the
\begin{definition}
  Let $({\bf P}_t)_{t>0}\subset \mc P([0,1[\times \R_+)$. For 
  two lower semicontinuous functionals $\I,\bar\I:\mc
  P([0,1[\times \R_+) \to [0,+\infty]$, $({\bf P}_t)_{t>0}$ satisfies
\begin{itemize}
\item[-] {a \emph{large deviations upper bound} with 
speed $t$    and 
rate $\I$, if 
\begin{equation}
 \varlimsup_{t \to +\infty} \frac{1}{t} \log {\bf P}_t(\mc C)
  \le - \inf_{u \in \mc C} \I(u)
\end{equation}
for each closed set $\mc C \subset \mc
    P([0,1[\times \R_+)$
}

\item[-] {a \emph{large deviations lower bound} with 
speed $t$    and 
rate $\bar\I$, if
    
\begin{equation}
 \varliminf_{t \to +\infty} \frac{1}{t}  \log {\bf P}_t(\mc O)
  \ge - \inf_{u \in \mc O} \bar\I(u)
\end{equation}
 for each open set $\mc O \subset\mc P([0,1[\times \R_+)$.}
\end{itemize}
The family $({\bf P}_t) _{t>0}$ is said to satisfy a \emph{large deviations
  principle} if both the upper and lower bounds hold with same rate
 $\I=\bar\I$.
\end{definition}

For $X$ a metric space, $\mu \in \mc P(X)$ and $f\in L_1(X,d\mu)$,
$\mu(f) \equiv \mu(f(x)) \equiv \int\!\mu(dx) f(x)$ denotes the
integral of $f$ with respect to $\mu$.
For $\mu,\,\nu$ probability measures on $X$,
$\H(\nu\,|\,\mu)$ is the relative entropy of $\nu$ with respect to $\mu$,
this notation is used regardless of the space $X$.

For $\ell \in [0,1]$ we define the measure $\lambda_\ell \in \mc P([0,1[)$ as
\begin{equation}
\label{e:lambda}
\lambda_\ell(dq):=
\begin{cases}
 \ell ^{-1} \un {[0,\ell[}(q)\, dq
  & \text{if $\ell \in ]0,1]$}
\\
\delta_0(dq)
  & \text{if $\ell = 0$}
\end{cases}
\end{equation}
where $dq$ is the Lebesgue measure on $[0,1[$.
Let us define $\Omega_0\subset\mc P([0,1[\times \R_+)$ as
\begin{equation}
  \label{e:muomega0}
\Omega_0:=\left\{
\mu(dq,dp)= \pi(dp)\, dq, \,
\pi \in \mc P(\R_+^*), \, \pi(p)<+\infty\right\}
\end{equation}
and
$\Omega\subset\mc P([0,1[\times \R_+)$ as
\begin{equation}  \label{e:muomega}
\begin{split}
\Omega:=\Big\{ &
  \mu(dq,dp)= \alpha_1 \pi(dp)\, dq
  +\alpha_2\, \delta_0(dp)\, dq + \alpha_3\, \delta_0(dp) \lambda_\ell(dq) :
  \\ &  \alpha_i\in [0,1], \, \alpha_1+\alpha_2+\alpha_3=1,
  \, \pi \in \mc P(\R_+^*), \, \pi(p)<+\infty, \, \ell\in [0,1) \Big\}
\end{split}
\end{equation}
where here and hereafter we understand $\pi(p):= \int\! p\, \pi(dp) \in ]0,+\infty]$.

If $\mu \in \Omega$ then the writing \eqref{e:muomega} is unique up to
the trivial arbitrary choice of $\pi$ or $\ell$ when respectively
$\alpha_1=0$ or $\alpha_3=0$.
We adopt throughout the paper the convention
\begin{equation}
  \label{convention}
0\cdot \infty =0.
\end{equation}

\begin{definition}
Let
\begin{equation}
\label{e:xiphi}
\xi:= \sup \Big\{c \in \bb R\,:\: \phi(e^{c/p})<+\infty \Big\}
\in[0,+\infty],
\end{equation}
\begin{equation}
\label{e:xiphi2}
\bar\xi:= -\lim_{\delta\downarrow 0}\varliminf_{\gep\downarrow 0}
\gep\log\phi([\gep(1-\delta),\gep(1+\delta)[)\in[0,+\infty].
\end{equation}
If moreover $\pi\in\mc P(\R_+)$ satisfies $\pi(p) <+\infty$,
then we also set
\[
\tilde \pi(dp):= \frac 1{\pi(p)}\, p\, \pi(dp)\in\mc P(\R^*_+).
\]
Then the functionals $\I$ and $\bar\I$ are defined as
\begin{equation}
  \label{e:I2}
\I(\mu):=
  \begin{cases}
\mu(p) \H\big(\tilde\pi\, \big|\, \phi\big)   + (\alpha_2 +\alpha_3 \,\ell^{-1})\, \xi
           & \text{if $\mu \in \Omega$ is given by \eqref{e:muomega} }
\\
+\infty & \text{otherwise}
  \end{cases}
\end{equation}
\begin{equation}
  \label{e:I22}
\bar\I(\mu):=
  \begin{cases}
\mu(p) \H\big(\tilde\pi\, \big|\, \phi\big)   + \alpha_2\, \xi +\alpha_3 \,\ell^{-1}\, \bar\xi
           & \text{if $\mu \in \Omega$ is given by \eqref{e:muomega} }
\\
+\infty & \text{otherwise}
  \end{cases}
\end{equation}
where
\[
\tilde \pi(dp):= \frac 1{\pi(p)}\, p\, \pi(dp)\in\mc P(\R^*_+).
\]
\end{definition}

\begin{lemma}
For any $\phi\in\cP(\R_+^*)$ we have $\bar\xi\geq\xi\geq 0$ and therefore $\bar\I\geq\I$.
\end{lemma}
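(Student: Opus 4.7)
The inequality $\xi \geq 0$ is immediate from the definition: taking $c=0$ gives $\phi(e^{0/p}) = \phi(1) = 1 < +\infty$, so $0$ lies in the set appearing in \eqref{e:xiphi}. The content of the lemma thus lies in the inequality $\bar\xi \geq \xi$, after which $\bar\I \geq \I$ drops out by inspection of \eqref{e:I2}--\eqref{e:I22}.

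To prove $\bar\xi\geq\xi$, I would assume $\xi>0$ (otherwise nothing is to prove) and fix an arbitrary $c \in (0,\xi)$, so that $\phi(e^{c/p}) = \int e^{c/p}\,\phi(dp) < +\infty$. The core estimate is a one-line Chebyshev-type bound: for every $\gep>0$ and $\delta\in(0,1)$, any $p$ in the interval $[\gep(1-\delta),\gep(1+\delta)[$ satisfies $e^{c/p}\geq e^{c/(\gep(1+\delta))}$, whence
\begin{equation*}
\phi\bigl([\gep(1-\delta),\gep(1+\delta)[\bigr) \;\leq\; \phi(e^{c/p})\,\exp\!\Bigl(-\frac{c}{\gep(1+\delta)}\Bigr).
\end{equation*}
Taking logarithms, multiplying by $\gep$, and letting $\gep\downarrow 0$ (so that $\gep\log\phi(e^{c/p})\to 0$) yields
\begin{equation*}
\varliminf_{\gep\downarrow 0}\gep\log\phi\bigl([\gep(1-\delta),\gep(1+\delta)[\bigr)\;\leq\;-\frac{c}{1+\delta}.
\end{equation*}
Sending $\delta\downarrow 0$ and using the definition \eqref{e:xiphi2} gives $\bar\xi\geq c$. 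Since $c<\xi$ was arbitrary, $\bar\xi\geq\xi$ follows, including the case $\xi=+\infty$ in which $c$ can be taken arbitrarily large.

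For the second statement, if $\mu\notin\Omega$ then $\I(\mu)=\bar\I(\mu)=+\infty$; otherwise, writing $\mu$ as in \eqref{e:muomega}, the formulas \eqref{e:I2} and \eqref{e:I22} give
\begin{equation*}
\bar\I(\mu) - \I(\mu) \;=\; \alpha_3\,\ell^{-1}\,(\bar\xi - \xi) \;\geq\; 0,
\end{equation*}
where the nonnegativity uses $\bar\xi\geq\xi$ together with the convention \eqref{convention} (which handles the cases $\alpha_3=0$ and $\ell=0$, as well as $\xi=\bar\xi=+\infty$).

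There is no serious obstacle here; the only subtlety is bookkeeping with the conventions $0\cdot\infty=0$ and the possibility of infinite values of $\xi,\bar\xi$, which is absorbed by the monotonicity in $c$ of the set in \eqref{e:xiphi}.
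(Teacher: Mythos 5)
Your proof is correct and follows the same Chebyshev-type argument as the paper: fix $c<\xi$, bound $\phi([\gep(1-\delta),\gep(1+\delta)[)\le e^{-c/(\gep(1+\delta))}\phi(e^{c/p})$, take $\gep\log(\cdot)$, pass to the double limit, and let $c\uparrow\xi$. In fact your version is slightly cleaner than the paper's, which writes $e^{-c/(\gep(1-\delta))}$ in the intermediate bound (the wrong endpoint for an upper bound on $e^{-c/p}$, though the double limit washes out the difference), and you also spell out the trivial pieces ($\xi\ge 0$ via $c=0$ and the pointwise comparison $\bar\I\ge\I$) that the paper leaves implicit.
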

\begin{proof}
If $c<\xi$ then $\phi(e^{c/p})<+\infty$ and therefore
\[
\phi([\gep(1-\delta),\gep(1+\delta)[) =\int_{[\gep(1-\delta),\gep(1+\delta)[} e^{-c/p}e^{c/p} \, \phi(dp)
\leq e^{-c/(\gep(1-\delta))} \phi(e^{c/p})
\]
so that
\[
\bar\xi= -\lim_{\delta\downarrow 0}\varliminf_{\gep\downarrow 0}
\gep\log\phi([\gep(1-\delta),\gep(1+\delta)[) \geq c
\]
and letting $c\uparrow \xi$ we have the result.
\end{proof}

The following example shows that the inequality $\bar\xi \ge \xi$ can be strict. 
\begin{example} Let 
$\displaystyle{ \phi:= \frac 1Z \sum_{j\geq 0} e^{-2^j}\, \delta_{2^{-j}}}$.
Then  $\xi=1$ and $\overline\xi=+\infty$.
\end{example}
\begin{proof}
For $c\geq 0$
\[
\int e^{c/p}\, \phi(dp) = \frac 1Z \sum_{j\geq 0} e^{(c-1)2^j}
\]
which is finite if and only if $c<1$, so that $\xi=1$. On the other hand, it is easy to check that for $\delta<1/2$ and $\eps_j=3\,2^{-j}$, $\phi([\eps_j(1-\delta),\eps_j(1+\delta)[)=0$, so that $\overline \xi =+\infty$.

\end{proof}

However, for many cases of interest one has $\overline\xi=\xi$. For instance, if $\phi$ is such that
  \begin{equation*}
 \phi([0,p[)= \exp\left(-\frac{\xi+L(p)}{p}\right), \qquad p\geq 0,
  \end{equation*}
  for some function $L$ continuous at $0$, then $\overline\xi=\xi$. This is for instance the case if $\phi(dp)=
  \exp^{-\xi\,p} \xi^{-1} p^{-2} dp$ for some $\xi>0$ (which
  corresponds to exponential interarrival times) or $\phi(dp) =
  p^{\kappa -1} M(p) dp$ for some $\kappa >2$ and some function $M$ slowly varying at $0$ (which yields $\xi=0$ and interarrival times with polynomial decay).

\subsection{Main results}
Recall that a functional $J :  \mc P([0,1[\times \R_+) \to [0,+\infty]$ is \emph{good} if its sublevel sets are compact, namely if the set $\{ \mu \in  \mc P([0,1[\times \R_+) \,:\: J(\mu) \le M\}$ is compact for all $M\ge 0$. In other words, a functional is good iff it is coercive (namely its sublevel sets are precompact) and lower semicontinuous (namely the sublevel sets are closed). 
\begin{prop}
\label{p:igood}
 The functionals $\I$ and $\bar \I$ are good.
\end{prop}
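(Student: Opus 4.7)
My plan is to prove precompactness and lower semicontinuity of the sublevel sets of $\I$ separately; the argument for $\bar\I$ is entirely parallel. For \emph{precompactness}, since the $q$-marginal lives in $[0,1]$, tightness of $\{\mu:\I(\mu)\le M\}$ reduces by Chebyshev to showing that $\sup\{\mu(p):\I(\mu)\le M\}<+\infty$. Arguing by contradiction, if $\mu_n\in\Omega$ satisfies $\I(\mu_n)\le M$ and $\mu_n(p)\to +\infty$, then $\H(\tilde\pi_n|\phi)\to 0$, so by Pinsker $\tilde\pi_n\to\phi$ in total variation. Using the identity $\mu_n(p)=\alpha_1^n/\tilde\pi_n(1/p)\le 1/\tilde\pi_n(1/p)$, applying TV convergence to the truncations $h_K(p)=(1/p)\wedge K$ and letting $K\uparrow\infty$ by monotone convergence gives $\liminf_n\tilde\pi_n(1/p)\ge\phi(1/p)$, hence $\limsup_n\mu_n(p)\le 1/\phi(1/p)<+\infty$ (whether or not $\phi(1/p)$ is finite), contradicting $\mu_n(p)\to+\infty$.

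For \emph{lower semicontinuity}, suppose $\mu_n\to\mu$ narrowly with $\I(\mu_n)$ bounded. A DV--entropy bound applied to test functions $\lambda\,\un{\{p\ge K\}}$ shows the measures $p\rho_n$, with $\rho_n:=\alpha_1^n\pi_n$, are tight on $[0,+\infty)$; tightness of $\rho_n$ itself follows from the precompactness bound. Extracting a subsequence, $\alpha_i^n\to\alpha_i^*$, $\ell_n\to\ell^*$, and $\rho_n\rightharpoonup c\,\delta_0+\rho^*$ narrowly on $[0,+\infty)$, where $c\ge 0$ collects the mass escaping from $\pi_n$ to $0$ and $\rho^*$ is a subprobability on $\R_+^*$. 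The limit measure then decomposes as
\[
\mu\;=\;\rho^*\otimes dq\;+\;(\alpha_2^*+c)\,\delta_0\otimes dq\;+\;\alpha_3^*\,\delta_0\otimes\lambda_{\ell^*}\;\in\;\Omega,
\]
so the coefficient of $\xi$ in $\I(\mu)$ is $(\alpha_2^*+c)+\alpha_3^*/\ell^*$, exceeding $\lim_n(\alpha_2^n+\alpha_3^n/\ell_n)=\alpha_2^*+\alpha_3^*/\ell^*$ by exactly $c$. Since $p\rho_n\rightharpoonup p\rho^*$ and $\rho_n(p)\to\rho^*(p)=\mu(p)$, the entire LSC reduces to the single inequality
\[
\liminf_n \rho_n(p)\,\H(\tilde\rho_n|\phi)\;\ge\;\rho^*(p)\,\H(\tilde\rho^*|\phi)\;+\;c\,\xi.
\]

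The core of the argument is this last inequality, which I would establish via the Donsker--Varadhan variational formula applied to the test function $g(p)=f(p)+a\,p^{-1}\un{\{p\le\eps\}}$ with $f\in C_c(\R_+^*)$ and $a<\xi$. Since $\phi(e^{a/p})<+\infty$ by the definition of $\xi$, dominated convergence yields $\phi(e^g)\to\phi(e^f)$ as $\eps\downarrow 0$; and $\rho_n(p)\tilde\rho_n(g)=\rho_n(pf)+a\,\rho_n([0,\eps])\to\rho^*(pf)+a\,c$ after taking the $\liminf$ in $n$ and then $\eps\downarrow 0$ along continuity points. Taking the supremum over $f$ recovers $\rho^*(p)\,\H(\tilde\rho^*|\phi)$, and sending $a\uparrow\xi$ produces the $c\,\xi$ term. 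The same proof works verbatim for $\bar\I$, because the mass $c$ is absorbed into $\alpha_2$ (weighted by $\xi$ in both $\I$ and $\bar\I$), while the map $(\alpha_3,\ell)\mapsto\alpha_3/\ell$ is itself lower semicontinuous under the subsequential limits thanks to the convention $0\cdot\infty=0$. The main obstacle is precisely this entropic accounting near $p=0$: standard weak LSC of relative entropy gives only the weaker bound without the $c\xi$ correction, and the $1/p$ singularity in the DV test function is exactly what extracts the cost of mass collapsing onto $\delta_0$ at the sharp rate $\xi$.
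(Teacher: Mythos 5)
Your proof is correct and follows essentially the same strategy as the paper: your test function $g(p)=f(p)+a\,p^{-1}\un{\{p\le\eps\}}$ with $a<\xi$ is the sharp-cutoff version of the paper's smooth $\varphi_\delta(p)=(c/p)\chi_\delta(p)+(1-\chi_\delta(p))\varphi(p)$, your mass $c$ escaping from $\pi_n$ to $\delta_0$ is the paper's $\bar\alpha_1(1-\beta)$, and your Pinsker-based coercivity argument is a repackaging of the paper's bound $\mu_n(p)\le C/(\H(\tilde\pi_n|\phi)\vee\tilde\pi_n(p^{-1}))$. The only detail worth making explicit in your last paragraph is the case $\ell_n\to 1$: there the limiting mass $\alpha_3^*$ is reabsorbed into $\alpha_2$ and weighted by $\xi$ in $\bar\I(\mu)$, while your $\liminf$ carries $\alpha_3^*\bar\xi$, so the inequality $\bar\xi\ge\xi$ must be invoked — the paper sidesteps this by instead proving separately that $\J:=\bar\I-\I=\alpha_3\ell^{-1}(\bar\xi-\xi)$ is lower semicontinuous.
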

We can now state the main result of this paper.
\begin{theorem}\label{t:ld1}
For all $(q_0,p_0)\in[0,1[\times\,]0,+\infty[$,
the sequence $({\bf P}_t^{(q_0,p_0)})_{t>0}$ satisfies a large deviations
upper bound with with speed $t$ and rate $\I$,
and a large deviations lower bound with with speed $t$ and rate $\bar\I$.

The sequence $({\bf P}_t^{(q_0,p_0)})_{t\geq 0}$ satisfies a large deviations
principle with speed $t$ and good rate $\I$ iff $\xi=\bar\xi$.
\end{theorem}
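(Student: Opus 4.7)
The plan is to exploit the renewal structure: up to boundary corrections of order $O(1)$ (negligible at the exponential scale), the empirical measure $\mu_t$ is determined by the i.i.d.\ sequence $(v_i)$ via \eqref{qp}. Each complete trip $i$ with speed $v_i$ contributes $\tau_i = 1/v_i$ units of time, distributed as $dq\otimes \delta_{v_i}$. Denoting by $\pi_t^{\mathrm{a}}$ the arithmetic empirical of the first $N_t$ speeds, a direct calculation gives the identity $\pi_t(f) = (N_t/t)\,\pi_t^{\mathrm{a}}(f/v)$ relating the momentum marginal of $\mu_t$ to $\pi_t^{\mathrm{a}}$; in the limit this forces $\pi_t^{\mathrm{a}} \to \tilde\pi$ whenever $\mu_t \to \mu = \pi(dp)\,dq$, and applying Sanov's theorem to the $N_t \approx t\mu(p)$ speeds yields the bulk rate $\mu(p)\,\H(\tilde\pi\,|\,\phi)$. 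The remaining work is to handle ``slow'' trips (speeds of order $1/t$), which can produce the singular contributions on $\{p=0\}$ that Sanov misses.

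For the upper bound with rate $\I$: exponential tightness follows from the tail estimate $\phi([0,\eps]) \le \exp(-(\xi-\eta)/\eps)$ (valid for any $\eta > 0$ by Markov's inequality and the definition \eqref{e:xiphi}). For the local bound near $\mu \in \Omega$ written as in \eqref{e:muomega}, the trajectory is partitioned according to a speed threshold: fast trips contribute $\mu(p)\,\H(\tilde\pi\,|\,\phi)$ by Sanov; the completed slow trips have total duration $\approx \alpha_2 t$ and their joint probability is at most $\prod_i \phi([0,1/s_i]) \le \exp(-(\xi-\eta)\sum_i s_i)=\exp(-(\xi-\eta)\alpha_2 t)$; the single ongoing slow trip at time $t$ has speed $v\approx \ell/(\alpha_3 t)$ and contributes probability bounded by $\phi([0,\ell/(\alpha_3 t)(1+\delta)]) \le \exp(-(\xi-\eta)\alpha_3\ell^{-1} t)$. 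Letting $\eta,\delta \downarrow 0$ recovers $\I(\mu)$.

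For the lower bound with rate $\bar\I$: given $\mu\in\Omega$ with $\bar\I(\mu)<+\infty$, construct an event $A$ forcing $\mu_t$ into a prescribed neighbourhood of $\mu$, with $\bbP(A) \ge \exp(-t\bar\I(\mu)-o(t))$. The event specifies three independent blocks of the sequence. The first block has $\approx t\mu(p)$ fast trips whose empirical approximates $\tilde\pi$; Sanov's lower bound gives cost $\exp(-t\mu(p)\,\H(\tilde\pi|\phi)+o(t))$. The third block is the ongoing trip, whose speed must lie in a small window $[\ell(1-\delta)/(\alpha_3 t),\,\ell(1+\delta)/(\alpha_3 t)]$ to place the particle at position $\ell$; the probability of this window is, by the very definition \eqref{e:xiphi2} of $\bar\xi$ and an appropriate choice of $\delta>0$, at least $\exp(-(\bar\xi+\eta)\alpha_3\ell^{-1} t)$. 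The delicate block is the second one, producing the $\alpha_2 \delta_0(dp)\,dq$ contribution at rate $\xi$ rather than $\bar\xi$: a \emph{single} slow trip of duration $\alpha_2 t$ would cost $\exp(-\bar\xi\alpha_2 t)$, which is in general too large. Instead one uses $K_t\to+\infty$ slow trips of common duration $s_{K_t}$, chosen along the subsequence realizing $\liminf_{v\downarrow 0}(-v\log\phi([0,v]))=\xi$ (a consequence of \eqref{e:xiphi}), giving cost $\phi([0,1/s_{K_t}])^{K_t} \ge \exp(-(\xi+\eta)\alpha_2 t)$.

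The equivalence with $\xi=\bar\xi$ finally follows from uniqueness of the rate in an LDP: if $({\bf P}_t^{(q_0,p_0)})$ satisfies an LDP with good rate $J$, comparing the LDP upper bound on a closed ball around $\mu$ with the already-proved lower bound at $\bar\I$ on the open ball, together with lower semicontinuity of $J$ and $\bar\I$, yields $J(\mu) \le \bar\I(\mu)$; symmetrically $\I(\mu) \le J(\mu)$. Since $\I$ and $\bar\I$ differ precisely in the coefficient of $\alpha_3\ell^{-1}$ (namely $\xi$ versus $\bar\xi$), the identity $\I=J=\bar\I$ forces $\xi=\bar\xi$. Conversely, if $\xi=\bar\xi$ then $\I=\bar\I$ trivially and the LDP is immediate from the two one-sided bounds. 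The main technical obstacle I expect is the many-slow-trip construction in the $\alpha_2$ lower bound: selecting $K_t$ and $s_{K_t}$ so that the estimate works uniformly in $t$ (not merely along a subsequence of $t$'s) while keeping $\mu_t$ inside the fixed weak neighbourhood of $\mu$, requires careful bookkeeping of the interplay between the scale $s_{K_t}$ and the tolerance of the neighbourhood.
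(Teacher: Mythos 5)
Your overall picture --- a Sanov bulk plus singular contributions from speeds of order $1/t$ --- is the right one, and you correctly locate the technical sore spots; but two of them are genuine gaps, closed in the paper by taking a different route.

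First, the upper bound as sketched does not close: $N_t$ is a random, speed-dependent index, so Sanov cannot be applied to the arithmetic empirical of the first $N_t$ speeds, and the conditioning on ``fast vs.\ slow'' trips leaves a combinatorial sum over slow-trip configurations that your sketch does not control. The paper instead proves a Varadhan--Laplace upper bound directly: it exhibits a class $\Lambda$ of test functions $f$ (built by adding a boundary-layer term $\frac{c}{p}\un{[0,\delta[}(p)$ with $c<\xi$ to a bounded $\varphi$) for which $\sup_t\E_\phi\big(e^{t\overline\mu_t(f)}\big)<+\infty$ by an explicit renewal computation (Proposition~\ref{mgf}), then Chebyshev, minimax, and exponential tightness; the slow-trip contribution $(\alpha_2+\alpha_3\ell^{-1})\xi$ is read off from $\sup_{f\in\Lambda}\mu(f)\ge\I(\mu)$ (Lemma~\ref{l:3.4}). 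Also, your exponential-tightness estimate is aimed at the wrong tail: $\phi([0,\eps])\le e^{-(\xi-\eta)/\eps}$ controls \emph{slow} trips, while exponential tightness is threatened by too many \emph{fast} trips (i.e.\ small $\tau_i$); the relevant input (Lemma~\ref{l:3.2}) is the elementary bound $\E(e^{-\tau_1})<1$.

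Second, the ``only if'' direction of the equivalence is not actually proved. Your sandwich $\I\le J\le\bar\I$ for any good LDP rate $J$ is correct, but it does \emph{not} yield $\I=J=\bar\I$: when $\xi<\bar\xi$ there is a priori room for a rate $J$ strictly between $\I$ and $\bar\I$, which would still be an LDP. The paper closes this (\S 6) by constructing, for a fixed $\mu=\alpha\gamma+(1-\alpha)\lambda_\ell$, sequences $(t_k)$ and $(s_k)$ along which $\frac1t\log\mb P_t(\mc A^{\alpha,\ell}_\delta)$ converges to $-(1-\alpha)\ell^{-1}\xi$ and to $-(1-\alpha)\ell^{-1}\bar\xi$ respectively; if a good-rate LDP held, both limits would have to be $-J(\mu)$, forcing $\xi=\bar\xi$. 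Some such two-subsequence argument is indispensable.

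Third, on the lower bound: the identity $\liminf_{v\downarrow 0}(-v\log\phi([0,v]))=\xi$ that you invoke is indeed a consequence of \eqref{e:xiphi}, and your ``many slow trips'' plan is in principle salvageable, but the bookkeeping you flag is real (tuning $K_t$, $s_{K_t}$ so that $K_ts_{K_t}\approx\alpha_2 t$, $s_{K_t}=o(t)$, uniformly in $t$, while staying inside a fixed weak neighbourhood). The paper avoids it entirely via Lemma~\ref{l:Idensity}: any $\mu$ with $\alpha_2>0$ is $\overline\I$-approximated by $\mu_n\in\bar\Omega$ with $\alpha_2=0$, replacing $\alpha_2\,\delta_0(dp)\,dq$ by a tilted density $\propto e^{c/p}\un{[0,1/n]}(p)$ ($c<\xi$) whose entropy cost tends to $\alpha_2 c$; the lower bound on $\bar\Omega$ needs only a single conditioned ongoing slow trip, which yields the $\alpha_3\ell^{-1}\bar\xi$ piece, and a diagonal argument finishes.
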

\subsection{A comparison with previous work}

We note that in \cite{LMZ3} we have studied a large deviations principle for the empirical measure of renewal processes, which turns out to be strictly related to 
Theorem \ref{t:ld1}. We recall the definition of {\it backward recurrence time process} $(A_t)_{t\geq 0}$ and
the {\it forward recurrence time process} $(B_t)_{t\geq 0}$ are defined by
\begin{equation*}
A_t:=t-S_{N_t-1}, \qquad B_t:=S_{N_t}-t, \qquad t\geq 0,
\end{equation*}
where $S_0:=0$, $S_n:=\tau_1+\cdots+\tau_n$, $n\geq 1$, and
\begin{equation*}
N_t:=\sum_{n=0}^\infty \un{(S_n\leq t)}=
\inf\left\{ n\geq 0\,:\: S_{n} >t\right\}
\end{equation*}
is the number of renewals before time $t>0$, see \cite{asmussen}. Then we can see that, in our context, if $T_0=0$ then
\[
q_t = \frac{B_t}{A_t+B_t}, \qquad p_t = \frac1{A_t+B_t}, \qquad t>0.
\]
Therefore one could expect that a LDP for $(A_t,B_t)_{t\geq 0}$ yields an analogous LDP for $(q_t,p_t)_{t\geq 0}$ by a contraction principle.
However, $(q_t,p_t)_{t\geq 0}$ is not a continuous function of $(A_t,B_t)_{t\geq 0}$, in particular if $A_t+B_t\to+\infty$ then $q_t$ can oscillate in $[0,1]$.
Indeed, the LDP for the empirical measure of $(A_t,B_t)_{t\geq 0}$ at speed $t$ holds for any inter-arrival distribution for the i.i.d. sequence $(\tau_i)_{i\geq 1}$ and the   rate functional is similar to $\I$ in \eqref{e:I2}, but it does not contain the last term with $\ell^{-1}\xi$, see \cite{LMZ3}.

In \cite{LefevereZambotti1, LMZ1, LMZ2} our random speed particle in a box is used to construct a class of dynamics which can model the transport of heat in certain materials and which displays anomalous large deviations properties, in particular a lack of analyticity of the LD rate functionals of certain physical observables like the energy current. The results of this paper clarify such anomalies, which are related with the appearance of the additional terms multiplying $\xi$ in the expression \eqref{e:I2} of $\I$.

Finally, we note that the process $(q_t,p_t)_{t\geq 0}$ is a simple example of a {\it piecewise deterministic process}, see \cite{davis, jaco}. For other results on large deviations of a class of piecewise deterministic processes, see \cite{faga}.

\section{The rate functionals}
In this section we study the rate functionals  $\I$ and  $\overline\I$ defined in 
\eqref{e:I2} and, respectively, \eqref{e:I22}.
We recall that any $\mu \in \Omega$ can be written in the form
\eqref{e:muomega}; however $\pi$ or $\ell$ are not uniquely defined if
$\alpha_1=0$ or $\alpha_3=0$ respectively. In order to have a
notational consistency and avoid to distinguish all different
cases, throughout this section we set $\pi=\phi$ whenever
$\alpha_1=0$ and $\ell =1/2$ whenever $\alpha_3=0$.

We denote by $C_\mathrm{b}(\R_+)$ the space of all bounded
continuous functions on $\R_+=[0,+\infty[$.
\begin{lemma}\label{algfact} For all $\pi\in \cP(\R_+)$ and $a>0$
\begin{equation*}
\begin{split}
\pi(p)\, \H\big(\tilde\pi\, \big|\, \phi\big) & =
\sup_{\varphi} \left(\pi(p\varphi) -\pi(p)\log\phi (e^\varphi)\right)
\\ & = \sup_{\varphi\,:\,
\phi (e^\varphi)=a} \left(\pi(p\varphi) -\pi(p)\log\phi (e^\varphi)\right)
= \sup_{\varphi\,:\,
\phi (e^\varphi)=1} \pi(p\varphi)
\end{split}
\end{equation*}
where the suprema are taken over $\varphi\in C_\mathrm{b}(\R_+)$.
\end{lemma}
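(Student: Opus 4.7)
The plan is to derive the three equalities as successive reformulations of the classical Donsker--Varadhan variational representation of relative entropy,
\[
\H(\nu\mid\mu) \;=\; \sup_{\varphi\in C_{\mathrm b}} \bigl(\nu(\varphi)-\log\mu(e^\varphi)\bigr),
\]
valid for any pair of Borel probability measures on a Polish space. Taking this as the only non-trivial input, the three identities will follow from straightforward algebraic manipulations.

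For the first equality, I would assume $\pi(p)\in\,]0,+\infty[$; the cases $\pi(p)=0$ (which forces $\pi=\delta_0$, so that both sides vanish by the convention $0\cdot\infty=0$) and $\pi(p)=+\infty$ are handled separately by direct inspection of both sides. Under this assumption $\tilde\pi$ is a Borel probability measure on $\R_+$, and the Donsker--Varadhan formula applied with $\nu=\tilde\pi$, $\mu=\phi$ reads
\[
\H(\tilde\pi\mid\phi) \;=\; \sup_{\varphi\in C_{\mathrm b}(\R_+)} \bigl(\tilde\pi(\varphi)-\log\phi(e^\varphi)\bigr).
\]
Since $\tilde\pi(\varphi)=\pi(p\varphi)/\pi(p)$ by the very definition of $\tilde\pi$, multiplying both sides by $\pi(p)$ yields the first claimed identity.

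The second and third equalities both rest on a simple translation invariance of the functional $F(\varphi):=\pi(p\varphi)-\pi(p)\log\phi(e^\varphi)$. For any constant $c\in\R$ one computes
\[
F(\varphi+c) \;=\; \pi(p\varphi)+c\,\pi(p)-\pi(p)\bigl(c+\log\phi(e^\varphi)\bigr) \;=\; F(\varphi),
\]
so $F$ depends only on $\varphi$ modulo additive constants. Given any $\varphi\in C_{\mathrm b}(\R_+)$, one has $\phi(e^\varphi)\in\,]0,+\infty[$, so the choice $c:=\log a-\log\phi(e^\varphi)$ produces $\varphi+c\in C_{\mathrm b}(\R_+)$ with $\phi(e^{\varphi+c})=a$. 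Therefore the supremum over unrestricted $\varphi$ and the supremum over $\varphi$ constrained by $\phi(e^\varphi)=a$ coincide, giving the second equality. Specializing to $a=1$ makes the logarithmic term drop out and produces the third equality.

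The only potential obstacle is the use of Donsker--Varadhan with continuous (rather than merely bounded measurable) test functions; on the Polish space $\R_+$ this is standard, since $C_{\mathrm b}$ is rich enough to approximate bounded measurable functions in the relevant $L^1$ norms, so no further analytic ingredient is required.
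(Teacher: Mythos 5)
Your proof is correct and uses the same two ingredients as the paper's: the Donsker--Varadhan variational formula $\H(\tilde\pi\mid\phi)=\sup_{\varphi}(\tilde\pi(\varphi)-\log\phi(e^\varphi))$ together with invariance of $\varphi\mapsto\pi(p\varphi)-\pi(p)\log\phi(e^\varphi)$ under additive constant shifts of $\varphi$. The paper phrases the shift as $\psi:=\varphi-\log a$ and then takes $\sup_a$, while you reparametrize by $c$ and observe directly that every $\varphi$ can be translated into the constraint set; these are the same argument, and your explicit remark on the degenerate cases $\pi(p)\in\{0,+\infty\}$ is a harmless addition.
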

\begin{proof} It is well known that
\[
\H\big(\tilde\pi\, \big|\, \phi\big) = \sup_{\varphi\in C_\mathrm{b}(\R_+)}
\left(\tilde\pi(\varphi) -\log\phi (e^\varphi)\right).
\]
Now, suppose that $\phi(e^\varphi)=a>0$ and set $\psi:=\varphi-\log a$. Then
\[
\pi(p\varphi) -\pi(p)\log\phi (e^\varphi) = \pi(p\psi) -
\pi(p)\log\phi (e^\psi)
\]
and $\phi(e^\psi)=1$. Therefore the quantity
\[
\sup_{\varphi\,:\,
\phi (e^\varphi)=a} \left(\pi(p\varphi) -\pi(p)\log\phi (e^\varphi)\right)
= \sup_{\varphi\,:\,
\phi (e^\varphi)=1} \left(\pi(p\varphi) -\pi(p)\log\phi (e^\varphi)\right)
\]
does not depend on $a>0$ and is equal to
\[
\begin{split}
 & \sup_a \sup_{\varphi\,:\,
\phi (e^\varphi)=a} \left(\pi(p\varphi) -\pi(p)\log\phi (e^\varphi)\right)
= \sup_{\varphi} \left(\pi(p\varphi) -\pi(p)\log\phi (e^\varphi)\right)
\\ & =
\pi(p)\, \H\big(\tilde\pi\, \big|\, \phi\big).
\end{split}
\]
\end{proof}

The proof of Proposition~\ref{p:igood} is splitted in the two following proofs for $\I$ and $\,\overline{\I}$.
\begin{lemma}\label{lowsem}
The sublevels of $\, \I$ are compact and  $\I$ is lower semicontinuous.
\end{lemma}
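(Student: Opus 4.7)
The task splits into two claims that together give compactness of the sublevel sets $\{\I \leq M\}$: lower semicontinuity (closedness) and relative compactness (tightness). I will address each in turn, noting the delicate interaction between the two pieces of $\I$.

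For tightness, the bound $\I(\mu) \leq M$ forces $\mu \in \Omega$ and yields $\mu(p)\H(\tilde\pi|\phi) \leq M$ together with $(\alpha_2 + \alpha_3/\ell)\xi \leq M$ when $\xi > 0$. Tightness in the $q$-direction is immediate: since $\lambda_\ell$ is supported in $[0,\ell]$ and $\alpha_3/\ell$ is bounded by $M/\xi$ (with the case $\xi=0$ handled separately using $\alpha_3 \leq 1$), the mass placed near $q=1$ is uniformly $O(\delta)$. Tightness in the $p$-direction uses the entropy variational inequality applied to indicator test functions: one obtains $\tilde\pi([R,+\infty)) \leq (\H + \log 2)/\log(1 + 1/\phi([R,+\infty)))$, which vanishes uniformly on entropy-bounded sets as $R \to +\infty$. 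A short bootstrap based on $\pi(p \un{p>R_0}) \leq \pi(p)\tilde\pi([R_0,+\infty))$ then yields a uniform bound on $\pi(p)$, from which $\mu(p > R) \to 0$ uniformly follows by Markov's inequality.

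For lower semicontinuity, take $\mu_n \to \mu$ weakly with $\I(\mu_n) \leq M$. By compactness of parameters and tightness of the $\pi_n$, pass to a subsequence along which $\alpha_{i,n}\to\alpha_i$, $\ell_n\to\ell\in[0,1]$, and $\pi_n \rightharpoonup \hat\pi = \beta\delta_0 + (1-\beta)\pi'$ weakly on $[0,+\infty)$ for some $\beta \in [0,1]$, $\pi' \in \mc P(\R^*_+)$. The limit $\mu$ then lies in $\Omega$, with the mass $\alpha_1\beta$ absorbed into the $\alpha_2\,\delta_0\,dq$ component (and, when $\ell_n\to 1$, also $\alpha_3$). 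The central estimate is
\[
\liminf_n \alpha_{1,n}\pi_n(p)\H(\tilde\pi_n|\phi) \;\geq\; \alpha_1\beta\xi + \alpha_1(1-\beta)\pi'(p)\H(\tilde\pi'|\phi).
\]
I would prove it by applying Lemma~\ref{algfact} with the combined test function $\varphi(p) = c/p + \psi(p)$, where $c<\xi$ and $\psi \in C_b$ has compact support in $(0,+\infty)$. The identity $\tilde\pi_n(1/p) = 1/\pi_n(p)$ (valid since $\pi_n(\{0\})=0$) converts the $c/p$-piece into a constant $c$, while the exponential tilt $\tilde\phi_c(dp) \propto e^{c/p}\phi(dp)$ decomposes $\log\phi(e^{c/p+\psi}) = \log\phi(e^{c/p}) + \log\tilde\phi_c(e^\psi)$; after taking $n\to\infty$, supremizing over $\psi$, and sending $c\uparrow\xi$, an algebraic cancellation yields exactly the bound above. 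A parallel case analysis confirms that the prelimit $(\alpha_{2,n}+\alpha_{3,n}/\ell_n)\xi$ passes correctly to the limit, consistently with the absorptions.

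The main obstacle is precisely the matching between entropy and the $\xi$-coefficient under mass absorption at $p=0$: mass of $\pi_n$ escaping to $0$ contributes nothing to the prelimit $(\alpha_{2,n}+\alpha_{3,n}/\ell_n)\xi$, yet generates an extra $\alpha_1\beta\xi$ in $\I(\mu)$ after absorption into $\alpha_2'$. The unbounded test function $c/p$, admissible because $\phi(e^{c/p}) < +\infty$ for $c < \xi$, is the technical device that extracts exactly this amount from $\H(\tilde\pi_n|\phi)$; together with the identity $\tilde\pi_n(1/p) = 1/\pi_n(p)$, it turns the integrability threshold defining $\xi$ directly into the sharp lower bound $\beta\xi$.
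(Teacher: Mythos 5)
Your overall strategy coincides with the paper's: coercivity via an entropy bound on the $p$-moment, and lower semicontinuity via Lemma~\ref{algfact} with an unbounded test function whose $c/p$ singularity (with $c<\xi$) is precisely the device that recovers the $\beta\xi$ contribution from mass of $\pi_n$ escaping to $p=0$. The additive decomposition $\varphi=c/p+\psi$ is a clean variant of the paper's blended test function $\varphi_\delta=(c/p)\chi+(1-\chi)\varphi$, and the ``algebraic cancellation'' you announce does indeed go through: using $\tilde\pi'(c/p)=c/\pi'(p)$ and $\H(\tilde\pi'|\tilde\phi_c)=\H(\tilde\pi'|\phi)+\log\phi(e^{c/p})-\tilde\pi'(c/p)$, the terms involving $\log\phi(e^{c/p})$ cancel after the sup over $\psi$, yielding exactly \eqref{e:tildeH}. (You do silently use that $\mu_n(p)\to\mu(p)$ along the subsequence, i.e.\ no escape of $p$-moment; this follows from the uniform integrability of $\tilde\pi_n$ against $\phi$ once $\H(\tilde\pi_n|\phi)$ is controlled, but it deserves a sentence.)

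There is, however, a genuine error in your coercivity step. You claim ``a uniform bound on $\pi(p)$'': this is false on $\{\I\le M\}$, since one can send $\alpha_1\downarrow 0$ while $\pi_n(p)\to\infty$ keeping $\mu_n(p)=\alpha_{1,n}\pi_n(p)$ and $\mu_n(p)\H(\tilde\pi_n|\phi)$ bounded. The quantity that must (and can) be bounded is $\mu(p)$, not $\pi(p)$. Moreover, your tail bound on $\tilde\pi$ requires $\H(\tilde\pi_n|\phi)$ to be bounded, whereas only the \emph{product} $\mu_n(p)\H(\tilde\pi_n|\phi)$ is $\le M$; so as written the bootstrap is circular. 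It can be repaired by a case split ($\mu_n(p)$ small is trivial; $\mu_n(p)\ge\varepsilon$ forces $\H\le M/\varepsilon$, whence your tail bound applies and the bootstrap yields $\mu_n(p)\le 2R_0$), but the paper's route is more direct: from $\mu_n(p)\le C/\H(\tilde\pi_n|\phi)$ and $\mu_n(p)\le\pi_n(p)=1/\tilde\pi_n(p^{-1})$ one gets $\mu_n(p)\le C/\big(\H(\tilde\pi_n|\phi)\vee\tilde\pi_n(p^{-1})\big)$, and the denominator is bounded away from $0$ because $\H(\tilde\pi_{n_k}|\phi)\to 0$ would force $\tilde\pi_{n_k}\rightharpoonup\phi$ hence $\liminf_k\tilde\pi_{n_k}(p^{-1})\ge\phi(p^{-1})>0$.
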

\begin{proof}
  Let $(\mu_n)_n\subset\mc P([0,1[\times \R_+)$ be a sequence of probability measures  such that
  \begin{equation}
\label{e:muI}
\varlimsup_{n\to + \infty} \I(\mu_n)
  <+\infty.
\end{equation}
We need to prove that $(\mu_n)_n$ is compact (coercivity of $I$) and
that for any limit point $\mu$ of $(\mu_n)$, $\varliminf_{n\to +\infty}
\I(\mu_n)\ge \I(\mu)$ (lower semicontinuity of $I$). Notice that \eqref{e:muI}
implies $\mu_n \in \Omega$ for $n$ large enough, i.e.\ by
\eqref{e:muomega}, we have
\begin{equation}
\label{e:muomegan}
\mu_n(dp,dq)= \alpha_{1,n} \,
\pi_n(dp) + \alpha_{2,n}\, \delta_0(dp)
dq + \alpha_{3,n}\, \delta_0(dp) \lambda_{\ell_n}(dq)
\end{equation}
with $\pi_n(p)<+\infty$.

\noindent\textit{Coercivity of $\I$.} Let us first show that
\begin{equation}
 \label{e:mup}
\varlimsup_{n\to +\infty} \mu_n(p)<+\infty
\end{equation}
By the bound \eqref{e:muI} and the definition \eqref{e:I2} of $\I$
\begin{equation*}
\mu_n(p) \le \frac{C}{ \H(\tilde\pi_n|\phi)}
\end{equation*}
for some finite constant $C\geq 1$. On the other hand, by
\eqref{e:muomegan}
\begin{equation*}
\mu_n(p) = {\alpha_{1,n}}\,{\pi_n(p)} \le {\pi_n(p)}=\frac1{\tilde\pi_n(p^{-1})}
\end{equation*}
and thus
\begin{equation*}
\varlimsup_n \mu_n(p) \le
\frac{C}{\varliminf_n \H(\tilde\pi_n|\phi) \vee  \tilde\pi_n(p^{-1})}
\end{equation*}
The denominator in the right hand side is uniformly bounded away from
$0$. Indeed, if there is a subsequence $(\tilde\pi_{n_k})_k$ along which
$\H(\tilde\pi_{n_k}|\phi)$ vanishes, then $\tilde\pi_{n_k} \rightharpoonup \phi$, and
thus $\liminf_k \tilde\pi_{n_k}(p^{-1}) \ge \phi(p^{-1})>0$. Thus
\eqref{e:mup} holds.

For each $M>0$, the set $\Omega_M:= \{\mu \in \Omega\,:\: \mu(p)\le
M\}$ is compact, and by \eqref{e:mup}, $\mu_n \in \Omega_M$ for some
$M$ large enough and for any $n$. Thus $(\mu_n)_n$ is compact.

\medskip
\noindent\textit{Semi-continuity of $\I$.}
Let $\mu \in \cP(\R_+)$ be such that along some subsequence, again denoted
$(\mu_n)_n$, $\mu_n \rightharpoonup \mu$.
Passing to subsequences, still
labeled by $n$ for notational simplicity, we can assume that  $\alpha_{i,n} \to \bar{\alpha}_i$ for $i=1,2,3$ and $\ell_n
\to \bar{\ell} \in [0,1]$ as $n\to +\infty$. Note that, in general, one
could have that $\bar{\alpha}_i \neq \alpha_i$ and $\bar{\ell} \neq \ell$.

Since $\mu_n(p)=\alpha_{1,n} \,\pi_n(p)$ is uniformly bounded by \eqref{e:mup}, if
$\bar{\alpha}_1>0$ then $(\pi_n)_n$ is compact in $\mc
P(\R_+)$. Therefore, up to passing to a further subsequence
\begin{equation}
\label{e:psinconv}
\lim_n \alpha_{1,n} \,\pi_n = \bar{\alpha}_1 \big(\beta
\zeta + (1-\beta) \delta_0 \big)
\end{equation}
for some $\beta \in [0,1]$ and $\zeta \in \mc P(\R_+)$
such that $\zeta(\{0\})=0$ and $\zeta(p)<+\infty$. In the same way
\begin{equation*}
  \lim_n \alpha_{3,n} \lambda_{\ell_n}=
   \bar{\alpha}_3 \lambda_{\bar{\ell}}.
\end{equation*}
Thus patching all together, we have $\mu\in\Omega$, in particular
\[
\mu(dp,dq)= \alpha_{1}
\pi(dp)\,dq + \alpha_{2}\, \delta_0(dp)\,
dq + \alpha_{3}\, \delta_0(dp) \,\lambda_{\ell}(dq),
\]
with
\begin{equation}\label{e:allconv}
 \left\{ \begin{array}{ll}
&  \alpha_1 = \bar{\alpha}_1 \beta
\\ \\
&\alpha_2 = \bar{\alpha}_2 +  \bar{\alpha}_1 (1-\beta) + \bar{\alpha_3}
\un {\{1\}}(\bar{\ell})
\\ \\
& \alpha_3 = \bar{\alpha}_3 \un {[0,1[}(\bar{\ell})
\\ \\
& \pi = \zeta \qquad {\rm if }\quad \alpha_1>0
\end{array}
\right.
\end{equation}
and $\pi\in\Omega_0$ is chosen arbitrarily whenever
$\alpha_1=0$. Therefore, we want to prove that
\[
  \begin{split}
\varliminf_{n \to +\infty} \I(\mu_n) & =
\varliminf_{n \to +\infty} \left[\mu_n(p) \H(\tilde\pi_n|\phi)
+ \left(\alpha_{2,n} +\alpha_{3,n} \ell_n^{-1}\right)\xi\,
\right] \\ & \geq
\mu(p) \H(\tilde\pi|\phi) + \left(\alpha_{2} +\alpha_{3} \, \ell^{-1}\right)\xi,
\end{split}
\]
with the usual convention $0\cdot\infty=0$.
Since
\begin{equation*}
 \varliminf_{n \to +\infty} \left(\alpha_{2,n} + \alpha_{3,n} \,
 \ell_n^{-1} \right)
\ge   \bar{\alpha}_2 + \bar{\alpha}_3 \, \bar{\ell}^{-1} =  \alpha_2 +
\alpha_3 \, \ell^{-1} - \bar{\alpha}_1 (1-\beta)
\end{equation*}
to conclude we are left to prove
\begin{equation}
\label{e:tildeH}
\varliminf_{n \to +\infty} \mu_n(p) \H(\tilde\pi_n|\phi) \ge
\bar{\alpha}_1 \beta \, \mu(p) \H(\tilde\pi|\phi) + \xi \,\bar{\alpha}_1 (1-\beta).
\end{equation}

Recall that
\begin{equation*}
\begin{split}
\mu_n(p) \H(\tilde\pi_n|\phi) & = \alpha_{1,n} \,\pi_n(p)
 \sup_{\varphi \in C_\mathrm{b}(\R_+)}
\big(\tilde\pi_n(\varphi) - \log \phi(e^\varphi)\big)
\\
& =  \alpha_{1,n}
 \sup_{\varphi \in C_\mathrm{b}(\R_+)}
 \big(\pi_n(p \varphi) -  \pi_n(p) \, \log \phi(e^\varphi)\big).
\end{split}
\end{equation*}
By a limiting argument, it is easily seen that the supremum in the
above formula can be taken over the set of measurable functions $\varphi$ such that
\begin{equation}
  \label{e:varphi}
  e^\varphi \in  L_1(\R_+, \phi),
\qquad
 p \mapsto p \varphi(p) \in L_1(\R_+, \pi_n).
\end{equation}
Let us fix $c\geq 0$ such that $\phi(e^{c/p})<+\infty$.
For $\delta>0$, let $\chi=\chi_\delta: \R_+ \to [0,1]$ be a smooth
decreasing function such that $\chi(p)=1$ for $p \in
[0,\delta]$ and $\chi(p) =0$ for $p\ge 2\delta$.  For
$\delta>0$ and $\varphi \in
C_\mathrm{b}(\R_+)$, consider the test function
\begin{equation*}
  \varphi_\delta(p) = \frac{c}{p} \, \chi(p) + (1-\chi(p)) \, \varphi(p), \qquad
  p>0.
\end{equation*}
Since $c<\xi$, then
$\varphi_\delta$ satisfies the integrability conditions \eqref{e:varphi} and
therefore
%
\begin{equation}\label{e:bound3}
\begin{split}
 \mu_n(p) \H(\tilde\pi_n|\phi)  \ge & \, \alpha_{1,n}
 \big(\pi_n(p \varphi_\delta) -  \pi_n(p) \, \log \phi(e^{\varphi_\delta})\big)
\\ = & \, \alpha_{1,n} \left[c\,\pi_n(\chi)+\pi_n(p(1-\chi) \varphi)
- \pi_n(p)
 \log \phi \big(e^{\varphi_\delta}\big)\right].
\end{split}
\end{equation}
If now
\begin{equation}
  \label{e:varphi0small}
  \phi(e^{\varphi})<1,
\end{equation}
then for each $c<\xi$ there exists
$\delta_0(c,\varphi)$ such that for each $\delta<\delta_0(c,\varphi)$
\begin{equation}
  \label{e:delta0small}
  \phi(e^{\varphi_\delta})
\le 1.
\end{equation}
In particular, if $\varphi\in
C_\mathrm{b}(\R_+)$ satisfies \eqref{e:varphi0small} and
$\delta <\delta_0(c,\varphi)$, then the logarithm in the last line of
\eqref{e:bound3} is negative. By \eqref{e:psinconv} then
\begin{equation*}
\begin{split}
&  \varliminf_{n \to +\infty} \mu_n(p) \, \H(\tilde\pi_n|\phi)  \ge
\\ & \geq
   \bar{\alpha}_1 (1-\beta) c
+ \bar{\alpha}_1 \beta \, \pi(p (1-\chi) \varphi)
-  \bar{\alpha}_1 \beta \, \pi(p)
 \log \phi(e^{\varphi_\delta}).
\end{split}
\end{equation*}
Since $\pi(\{0\})=\phi(\{0\})=0$ and $e^{\varphi_\delta}\leq
e^{c/p}\in L^1(\phi)$, as $\delta \downarrow 0$
we have $\chi=\chi_\delta\downarrow \un{\{0\}}$ and by
dominated convergence
\[
\pi(p (1-\chi) \varphi)
\to\pi(p \varphi), \qquad
\phi(e^{\varphi_\delta})\to \phi(e^{\varphi}).
 \]
Finally, taking the limit $c\uparrow\xi$, we obtain that
the inequality
\begin{equation}\label{e:bound5}
\begin{split}
\varliminf_n \mu_n(p) \H(\tilde\pi_n|\phi) &\, \ge  \bar{\alpha}_1 (1-\beta) \xi
+ \bar{\alpha}_1 \beta \,\pi(p \varphi)- \bar{\alpha}_1 \beta \, \pi(p)
 \log \phi(e^{\varphi})
\\ & \, =  \bar{\alpha}_1 (1-\beta) \,\xi + \bar{\alpha}_1 \beta \,
\pi(p) \big[\tilde\pi(\varphi) -   \log \phi(e^{\varphi})  \big],
\end{split}
\end{equation}
with the usual convention $0\cdot\infty=0$,
holds for any $\varphi\in C_\mathrm{b}(\R_+)$ satisfying
\eqref{e:varphi0small}. By Lemma~\ref{algfact}, taking the supremum
of the quantity in square brackets in the last line of \eqref{e:bound5}
over all $\varphi$ satisfying \eqref{e:varphi0small}, we obtain
\eqref{e:tildeH}.
\end{proof}

\begin{lemma}\label{lowsemo}
The sublevels of $\, \overline\I$ are compact and  $\overline\I$ is lower semicontinuous.
\end{lemma}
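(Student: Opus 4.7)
The strategy is to mirror the proof of Lemma \ref{lowsem}, exploiting the identity $\overline\I=\I+(\bar\xi-\xi)\,\alpha_3\ell^{-1}$ on $\Omega$ to reduce the work to a short algebraic check. Since $\bar\xi\ge\xi$, we have $\overline\I\ge\I$ pointwise, so sublevel sets of $\overline\I$ are contained in those of $\I$, and coercivity of $\overline\I$ is immediate from Lemma \ref{lowsem}.

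For lower semicontinuity, take $\mu_n\rightharpoonup\mu$ with $\varliminf_n\overline\I(\mu_n)<+\infty$ and pass to a subsequence attaining the liminf. Since $\I(\mu_n)\le\overline\I(\mu_n)$, the extraction procedure from the proof of Lemma \ref{lowsem} applies: after a further subsequence, $\mu_n\in\Omega$ with parameters $\alpha_{i,n}\to\bar\alpha_i$, $\ell_n\to\bar\ell\in[0,1]$, and $\alpha_{1,n}\pi_n$ converging as in \eqref{e:psinconv}, and $\mu\in\Omega$ with decomposition parameters related to the barred ones by \eqref{e:allconv}. The entropy bound \eqref{e:tildeH} established in Lemma \ref{lowsem} applies verbatim and gives
\[
\varliminf_n \mu_n(p)\,\H(\tilde\pi_n\,|\,\phi)\;\ge\;\mu(p)\,\H(\tilde\pi\,|\,\phi)+\bar\alpha_1(1-\beta)\,\xi,
\]
while continuity (with the convention \eqref{convention}) of the remaining terms yields $\alpha_{2,n}\xi\to\bar\alpha_2\xi$ and $\alpha_{3,n}\ell_n^{-1}\bar\xi\to\bar\alpha_3\bar\ell^{-1}\bar\xi$. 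Summing,
\[
\varliminf_n \overline\I(\mu_n)\;\ge\;\mu(p)\,\H(\tilde\pi\,|\,\phi)+\big[\bar\alpha_1(1-\beta)+\bar\alpha_2\big]\,\xi+\bar\alpha_3\bar\ell^{-1}\bar\xi.
\]

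By \eqref{e:allconv}, the target $\overline\I(\mu)$ expands as $\mu(p)\H(\tilde\pi|\phi)+[\bar\alpha_2+\bar\alpha_1(1-\beta)+\bar\alpha_3\un{\{1\}}(\bar\ell)]\xi+\bar\alpha_3\un{[0,1[}(\bar\ell)\bar\ell^{-1}\bar\xi$, so the desired inequality reduces to
\[
\bar\alpha_3\bar\ell^{-1}\bar\xi\;\ge\;\bar\alpha_3\un{\{1\}}(\bar\ell)\,\xi+\bar\alpha_3\un{[0,1[}(\bar\ell)\,\bar\ell^{-1}\bar\xi.
\]
For $\bar\ell<1$ this is an equality, and for $\bar\ell=1$ it collapses to $\bar\alpha_3\bar\xi\ge\bar\alpha_3\xi$, which holds since $\bar\xi\ge\xi$. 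The only delicate point of the argument is precisely this boundary case $\bar\ell=1$: the singular component $\alpha_{3,n}\delta_0(dp)\lambda_{\ell_n}(dq)$ flattens in the limit to $\bar\alpha_3\delta_0(dp)\,dq$, and hence mass that was scored by the coefficient $\bar\xi$ in $\overline\I(\mu_n)$ is reassigned in $\overline\I(\mu)$ to the $\alpha_2$ component, which is scored by $\xi$; the inequality $\bar\xi\ge\xi$ is exactly what is needed to ensure that this reassignment is consistent with lower semicontinuity.
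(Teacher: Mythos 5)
Your proof is correct, and it takes a genuinely different route from the paper's. You reach into the internals of the proof of Lemma~\ref{lowsem} — specifically the entropy estimate \eqref{e:tildeH} and the parameter-convergence relations \eqref{e:allconv} — then reassemble them with the $\overline\I$ weights and verify the residual algebraic inequality, which collapses exactly to $\bar\xi\geq\xi$ in the boundary case $\bar\ell=1$. The paper instead treats Lemma~\ref{lowsem} as a black box: it writes $\overline\I=\I+\J$ with $\J(\mu)=\alpha_3\ell^{-1}(\bar\xi-\xi)$ on $\Omega$ and $+\infty$ elsewhere, observes that $\J\geq0$ with $\J(\mu)=0$ in the only delicate limiting regime, and concludes by the elementary fact that a sum of lower semicontinuous functionals is lower semicontinuous. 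The paper's decomposition is more modular and shorter; your version is more explicit about where the inequality $\bar\xi\geq\xi$ actually enters (the flattening $\lambda_{\ell_n}\rightharpoonup dq$ as $\ell_n\to1$, which reassigns mass from the $\alpha_3$-term weighted by $\bar\xi$ to the $\alpha_2$-term weighted by $\xi$), and in that sense it is the more illuminating diagnosis of the one non-trivial case — indeed the paper's own description of the non-trivial case (``$\alpha_{3,n}\to0$'') is less precise than your identification of $\bar\ell=1$ with $\bar\alpha_3>0$. One small cosmetic point: where you write that the remaining terms converge ``by continuity'', it would be safer to say that, after the subsequence extraction, $\alpha_{2,n}\xi$ and $\alpha_{3,n}\ell_n^{-1}\bar\xi$ admit limits (possibly $+\infty$) in $[0,+\infty]$ equal to $\bar\alpha_2\xi$ and $\bar\alpha_3\bar\ell^{-1}\bar\xi$ respectively, so that the liminf of the sum dominates the sum of the liminfs; the convention \eqref{convention} then handles the degenerate cases $\bar\alpha_3=0$, $\bar\ell=0$.
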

\begin{proof}
Since $\I\leq\overline\I$, then $\left\{\overline\I\leq M\right\}\subseteq\{\I\leq M\}$ and
therefore by Lemma~\ref{lowsem} $\left\{\overline\I\leq M\right\}$ is pre-compact. Let us now show lower semi-continuity. We set $\J:=\overline\I-\I\geq 0$ and we remark that
\[
\J(\mu)=
  \begin{cases}
\alpha_3 \,\ell^{-1}\, \left(\overline\xi-\xi\right)
           & \text{if $\mu \in \Omega$ is given by \eqref{e:muomega} }
\\
+\infty & \text{otherwise}
  \end{cases}
\]
with the usual convention $0\cdot\infty=0$.
Since $\Omega$ is closed in $\mc P([0,1[\times \R_+)$, then $\J$ is lower semi-continuous; indeed, the only non trivial case is for a sequence $\Omega\ni\mu_n\rightharpoonup\mu$ such that $\alpha_{3,n}\to 0$, and in this case by \eqref{e:allconv} above $\mu\in\Omega$ must be given by  \eqref{e:muomega} with $\alpha_3=0$.
Therefore $\J(\mu)=0\leq \varliminf_n \J(\mu_n)$, since $\overline\xi\geq\xi$.
\end{proof}

The next lemma will be used in the following.
\begin{lemma}
  \label{l:Idensity}
Let
\begin{equation}
  \label{e:Omegabar}
  \bar{\Omega}:= \big\{\mu \in \Omega\,:\: \text{ \eqref{e:muomega}
      holds with } \, \alpha_2=0,\, \ell >0 \big\}
\end{equation}
Then $\bar{\Omega}$ is $\overline\I$-dense in $\mc P([0,1[\times \R_+)$,
namely for each $\mu \in \mc P([0,1[\times \R_+)$ such that
$\overline\I(\mu)<+ \infty $ there exists a sequence $(\mu_n)_n$ in $\bar{\Omega}$
such that $\mu_n\rightharpoonup\mu$ and $\varlimsup_n \overline\I(\mu_n) \le \overline\I(\mu)$.
\end{lemma}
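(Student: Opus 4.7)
The plan is to isolate the obstructions to $\mu\in\bar{\Omega}$: for $\mu\in\Omega$ with $\bar{\I}(\mu)<+\infty$, either $\alpha_2>0$ (the flat-on-$[0,1[$ slow component is present) or $\ell=0$ with $\alpha_3>0$. The latter is cosmetic: $\ell$ is arbitrary when $\alpha_3=0$, and otherwise $\bar{\I}(\mu)<+\infty$ forces $\ell>0$ unless $\bar{\xi}=0$, in which case $\lambda_0=\delta_0$ is narrowly approximated by $\lambda_{1/n}$ at no $\bar{\I}$-cost. The genuine task is therefore to absorb an $\alpha_2\,\delta_0(dp)\,dq$ component into the absolutely-continuous-in-$p$ component at an entropic cost of at most $\alpha_2\xi$.

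Assume we have $(\rho_n)_n\subset\mc P(\R^*_+)$ with $\rho_n\rightharpoonup\delta_0$, $\rho_n(p)\to 0$ and $\varlimsup_n\rho_n(p)\,\H(\tilde\rho_n\,|\,\phi)\le\xi$. Define
\[
\pi^{(n)}:=\frac{\alpha_1\pi+\alpha_2\rho_n}{\alpha_1+\alpha_2}\in\mc P(\R^*_+),\qquad \mu_n:=(\alpha_1+\alpha_2)\,\pi^{(n)}(dp)\,dq+\alpha_3\,\delta_0(dp)\,\lambda_\ell(dq),
\]
so that $\mu_n\in\bar{\Omega}$ and $\mu_n\rightharpoonup\mu$. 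A direct check gives $\widetilde{\pi^{(n)}}=\beta_n\tilde\pi+(1-\beta_n)\tilde\rho_n$ with $\beta_n=\alpha_1\pi(p)/\mu_n(p)$, and convexity of $\H(\,\cdot\,|\,\phi)$ then yields
\[
\mu_n(p)\,\H(\widetilde{\pi^{(n)}}\,|\,\phi)\,\le\,\alpha_1\,\pi(p)\,\H(\tilde\pi\,|\,\phi)+\alpha_2\,\rho_n(p)\,\H(\tilde\rho_n\,|\,\phi).
\]
Adding the unchanged $\alpha_3\ell^{-1}\bar{\xi}$ term and passing to $\varlimsup$ gives $\varlimsup_n\bar{\I}(\mu_n)\le\bar{\I}(\mu)$.

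The crux is the construction of such $(\rho_n)_n$. If $\xi=+\infty$ then $\bar{\I}(\mu)<+\infty$ already forces $\alpha_2=0$, and there is nothing to prove. For $\xi<+\infty$, I would take the exponentially tilted probabilities
\[
\rho_n(dp):=\frac{1}{Z_n}\,\frac{e^{\xi_n/p}}{p}\,\un{[a_n,b_n]}(p)\,\phi(dp),\qquad Z_n:=\int_{a_n}^{b_n}\frac{e^{\xi_n/p}}{p}\,\phi(dp),
\]
with $\xi_n<\xi$, $\xi_n\uparrow\xi$ (so $\phi(e^{\xi_n/p})<+\infty$) and $0<a_n<b_n\to 0$. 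Setting $Y_n:=\int_{a_n}^{b_n}e^{\xi_n/p}\,\phi(dp)$, one computes $\tilde\rho_n=Y_n^{-1}\,e^{\xi_n/p}\,\un{[a_n,b_n]}\,\phi(dp)$, $\rho_n(p)=Y_n/Z_n\le b_n$ and
\[
\rho_n(p)\,\H(\tilde\rho_n\,|\,\phi)\,=\,\xi_n-\rho_n(p)\,\log Y_n.
\]
The main obstacle is the quantitative choice of $(\xi_n,a_n,b_n)$: if $\phi(e^{\xi/p})<+\infty$ then $Y_n$ is bounded and any $b_n\to 0$ works; otherwise $\xi_n$ must be made to approach $\xi$ slowly enough that $b_n\,|\log Y_n|\to 0$, which is achievable by a standard diagonal extraction since $Y_n<+\infty$ for each fixed $n$.
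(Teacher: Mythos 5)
Your overall strategy coincides with the paper's: replace the $\alpha_2\,\delta_0(dp)\,dq$ component by a mollifying family $\rho_n(dp)\,dq$ concentrating at $0$, and use convexity of $\H(\cdot\,|\,\phi)$ to split the entropic cost. The identity $\widetilde{\pi^{(n)}}=\beta_n\tilde\pi+(1-\beta_n)\tilde\rho_n$ with $\beta_n=\alpha_1\pi(p)/\mu_n(p)$ is correct, and the convexity estimate $\mu_n(p)\,\H(\widetilde{\pi^{(n)}}\,|\,\phi)\le\alpha_1\pi(p)\,\H(\tilde\pi\,|\,\phi)+\alpha_2\,\rho_n(p)\,\H(\tilde\rho_n\,|\,\phi)$ is a nice, clean way to organize what the paper does via a disjoint-support computation. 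The reduction of the case $\ell=0$, $\alpha_3>0$, is also fine (this forces $\bar\xi=0$, and $\lambda_{1/n}\rightharpoonup\delta_0$ at cost $n\cdot 0\cdot\bar\xi=0$).

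However, the crux of the argument --- producing $(\rho_n)$ with $\rho_n\rightharpoonup\delta_0$, $\rho_n(p)\to 0$ and $\varlimsup_n\rho_n(p)\,\H(\tilde\rho_n\,|\,\phi)\le\xi$ --- is where you have a genuine gap, and in fact your case analysis is inverted. You write that ``if $\phi(e^{\xi/p})<+\infty$ then $Y_n$ is bounded and any $b_n\to 0$ works.'' This is false: in that case $Y_n\downarrow 0$ as $b_n\downarrow 0$, so $\log Y_n\to-\infty$ and the term $-\rho_n(p)\log Y_n$ is \emph{positive} and need not vanish for a generic $b_n\to 0$; since $\rho_n(p)\le b_n$, one needs $b_n\log(1/Y_n)\to 0$, which forces $Y_n\ge e^{-o(1)/b_n}$, i.e.\ one must pick $b_n$ along a subsequence where $\phi$ is \emph{not} anomalously thin on $[b_n(1-\eta),b_n]$. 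Using only $Y_n\ge e^{\xi_n/b_n}\phi([b_n(1-\eta),b_n])$ one gets
\[
\rho_n(p)\,\H(\tilde\rho_n\,|\,\phi)\;\le\;-b_n\log\phi\bigl([b_n(1-\eta),b_n]\bigr),
\]
whose $\varliminf$ over $b_n\to 0$ is, a priori, controlled by $\bar\xi$ rather than $\xi$; so an arbitrary $b_n\to 0$ (or a ``standard diagonal extraction'') is simply not enough, and in fact by lower semicontinuity of $\I$ one has $\varliminf_n\rho_n(p)\,\H(\tilde\rho_n\,|\,\phi)\ge\xi$ for \emph{any} such family, so you must be hitting $\xi$ on the nose, not merely squeezing the constants. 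The correct case split is the opposite of yours. When $\phi(e^{\xi/p})=+\infty$, for each $b>0$ one has $\int_{[0,b]}e^{c/p}\phi(dp)\uparrow+\infty$ as $c\uparrow\xi$, so one can pick $(c_n,b_n)$ with $b_n\to 0$, $c_n\to\xi$ and $Y_n\ge 1$; then $\log Y_n\ge 0$ and $\rho_n(p)\,\H(\tilde\rho_n\,|\,\phi)\le c_n\to\xi$, with no delicate extraction at all. When $\phi(e^{\xi/p})<+\infty$, set $u_k:=e^{\xi 2^k}\phi\bigl([2^{-(k+1)},2^{-k}]\bigr)$; then $\sum_k u_k<+\infty$ while $\sum_k e^{c2^k}u_k=+\infty$ for every $c>0$, which forces $\varliminf_k 2^{-k}\log(1/u_k)=0$ (otherwise, $u_k\le e^{-(\gamma-\eps)2^k}$ eventually for some $\gamma>0$ and the second series would converge for small $c$). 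Along the subsequence $k_j$ realizing this liminf, set $b_j:=2^{-k_j}$ and $\tilde\rho_j:=\phi(\cdot\,|\,[b_j/2,b_j])$, so that $\rho_j(p)\le b_j\to 0$ and $\rho_j(p)\,\H(\tilde\rho_j\,|\,\phi)\le -b_j\log\phi\bigl([b_j/2,b_j]\bigr)=\xi+2^{-k_j}\log(1/u_{k_j})\to\xi$. This is the step that actually delivers the bound $\le\xi$; presenting it as a ``standard diagonal extraction'' hides the whole difficulty, and the sub-case where $\phi(e^{\xi/p})<+\infty$ is exactly the hard one, not the easy one.
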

\begin{proof}
  Since $\overline\I(\mu)<+\infty$ then $\mu \in \Omega$ and it can be written
  as in \eqref{e:muomega}. If $\alpha_3=1$, then we consider $\ell_n:=\ell+1/n$ and 
  \[
  \mu_n(dq,dp) :=  \delta_0(dp)\, \lambda_{\ell_n}(dq),
  \]
  where $n\in\N$ is large enough to have $\ell+1/n <1$.
  Therefore we can suppose that $\alpha_1+\alpha_2>0$. In what follows, if $\alpha_1=0$ then $\pi:=\gamma$.
  
 Let $c:=\max\{\xi-\gep,0\}$
  for $\gep>0$ and
  define
  \begin{equation*}
    \begin{split}
&    \mu_n(dq,dp) := (\alpha_1+\alpha_2) \, \pi_n(dp)\, dq
 + \alpha_3 \delta_0(dp) \lambda_\ell(dq),
 \end{split}
  \end{equation*}
  where
   \begin{equation*}
    \begin{split}
&    \pi_n(dp) := \frac1{\alpha_1+\alpha_2}\left( 
\alpha_1 \frac{\un{]1/n,+\infty[}(p)\,\pi(dp)}{\pi({]1/n,+\infty[})} + \alpha_2  
\, \frac{e^{c/p}\, \un{[0,1/n]}(p)\,     \gamma(dp)}{\int_{[0,1/n]}e^{c/p}\,   \gamma(dp)} 
 \right).
 \end{split}
  \end{equation*}
Then $\mu_n \in \bar{\Omega}$ and $\mu_n\rightharpoonup\mu$. Note that
\[
\lim_n\H\left( \tilde\pi_n \, | \, \phi \right) = \alpha_1\, \H\left( \tilde\pi \, | \, \phi \right)
+\alpha_2\, c \leq \alpha_1\, \H\left( \tilde\pi \, | \, \phi \right)
+\alpha_2\, \xi
\]
and from this it follows easily that $\varlimsup_n \overline\I(\mu_n) \le \overline\I(\mu)$.
\end{proof}

\section{Preliminary estimates}
\label{preles}
\subsection{Law of large numbers}
In this section we prove Proposition~\ref{limit}, which will come useful in the following.
\begin{proof}[Proof of Proposition~\ref{limit}]
By \eqref{tot}, it is enough to prove that $\bbP_{\tilde \pi}$-a.s. 
$\overline\mu_t\rightharpoonup dq\otimes\pi$ as $t\to+\infty$.
For all $f\in C([0,1]\times\R_+)$ we have
\[
\overline\mu_t(f) =\frac1t \, \sum_{i=1}^{N_t}  \frac1{v_i} \int_0^1f(q,v_i)\, dq
+\frac{1}{t\, v_{N_t+1}}\, \int_0^{v_{N_t+1}(t-S_{N_t})} f(q,v_{N_t+1})\, dq.
\]
By the strong law of large numbers a.s.
\[
\lim_{n\to+\infty} \frac1n\sum_{i=1}^{n}  \frac1{v_i} \int_0^1f(q,v_i)\, dq
=\int_0^1\tilde\pi(f(q,p)/p)\, dq=\frac1{\pi(p)}\int_0^1\pi(f(q,p))\, dq.
\]
By the renewal Theorem, a.s.
\[
\lim_{t\to+\infty} \frac{N_t}t = \frac1{\bbE_{\tilde \pi}(\tau_1)}=\frac{\pi(p)}{\int p\,\frac1p\, \pi(dp)} =\pi(p)\in\R^*_+.
\]
Therefore a.s.
\[
\lim_{t\to+\infty} \frac{{N_t}}t \, \frac1{{N_t}}\sum_{i=1}^{N_t}  \frac1{v_i} \int_0^1f(q,v_i)\, dq=
\int_0^1\pi(f(q,p))\, dq
\]
On the other hand, by the law of large numbers a.s.
\[
\lim_{n\to+\infty} \frac{S_n}n=\tilde\pi(1/p)=\frac1{\pi(p)},
\]
so that
a.s.
\[
\lim_{t\to+\infty} \frac{S_{N_t}}t=\lim_{t\to+\infty} \frac{S_{N_t}}{N_t} \, \frac{{N_t}}t=1,
\qquad \lim_{t\to+\infty} \frac{t-S_{N_t}}t=0.
\]
It follows that a.s.
\[
\lim_{t\to+\infty} \left| \frac{1}{t\, v_{N_t+1}}\, \int_0^{v_{N_t+1}(t-S_{N_t})} f(q,v_{N_t+1})\, dq \right|
\leq \lim_{t\to+\infty} \frac{t-S_{N_t}}t \, \|f\|_\infty =0.
\]
\end{proof}

\subsection{A simplified empirical measure}
We consider the case of a particle which is emitted from $q=0$ at time $t=0$ with initial speed $v_1=1/\tau_1$. In other words, we suppose that $T_0=0$ and we consider the (undelayed) classical renewal process
\[
S_0:=0, \quad S_n:=\tau_1+\cdots+\tau_n, \qquad n\geq 1,
\]
and the corresponding counting process
\[
N_t:=\sum_{n=1}^\infty \un{(S_n\leq t)}, \qquad t\geq 0.
\]
We define the process $(\overline q_t,\overline p_t)_{
t\geq 0}$, where
\[
\begin{split}
(\overline q_t,\overline p_t) & = F(t,(\tau_n)_{n\geq 1}) :=
\left(v_{N_t+1}   \left(t-S_{N_t} \right), v_{N_t+1} \right)
\\ & =
\left( \frac{t-S_{n-1}}{\tau_n},\frac 1{\tau_n}
\right)  \quad {\rm if} \quad
S_{n-1}\leq t< S_n, \qquad n\geq 1, \ t\geq 0.
\end{split}
\]
Then, for any initial condition $(q_0,p_0)\in[0,1[\,\times\,]0,+\infty[$, 
the process $(q_t,p_t)$ can be written in terms of $(\overline q_t,\overline p_t)_{
t\geq 0}$
\begin{equation}\label{qpp}
(q_t,p_t) = \left\{
\begin{array}{ll}
(q_0+p_0t,p_0) \qquad {\rm if} \quad  t<T_0,
\\ \\
(\overline q_{t-T_0},\overline p_{t-T_0}) \qquad {\rm if} \quad  t\geq T_0.
\end{array}
\right.
\end{equation}
We consider now the empirical measure $\overline\mu_t$ of $(\overline q_t,\overline p_t)_{t\geq 0}$
\[
\overline\mu_t:= \frac 1t \int_{[0,t[}\delta_{(\overline q_s,\overline p_s)}\, ds
\in \mc P([0,1[\times \R_+), \qquad t>0,
\]
and we denote by ${\bf P}_t$ the law of $\overline\mu_t$.
An explicit computation shows that for all $f\in C([0,1]\times\R_+)$ we have

%
\begin{equation}\label{1}
\begin{split}
\overline\mu_t(f) & =\frac1t \, \sum_{i=1}^{N_t}  \frac1{v_i} \int_0^1f(q,v_i)\, dq
+\frac{1}{t\, v_{N_t+1}}\, \int_0^{v_{N_t+1}(t-S_{N_t})} f(q,v_{N_t+1})\, dq.
\end{split}
\end{equation}
By \eqref{qpp}, for any initial condition $(p_0,q_0)$ and $t\geq T_0$,
\[
\mu_t=\frac1t\int_0^{T_0} \delta_{(q_0+sp_0,p_0)}\, ds + \frac{t-T_0}t \, 
\overline\mu_{t-T_0},
\]
so that 
\begin{equation}\label{tot}
\|\mu_t-\overline\mu_{t-T_0}\|_{\rm Tot} \leq \frac{2T_0}t, \qquad t\geq T_0,
\end{equation}
where $\|\cdot\|_{\rm Tot}$ denotes the total variation norm.
Therefore, the large deviations rate functionals of $(\mu_t)_{t\geq 0}$ and
$(\overline\mu_t)_{t\geq 0}$ are the same, see \cite[Chap. 4]{demzei}, and 
thus Theorem~\ref{t:ld1} is equivalent to the following
\begin{theorem}\label{t:ld2}
The sequence $({\bf P}_t)_{t>0}$ satisfies a large deviations
upper bound with with speed $(t)$ and rate $\I$
and a large deviations
lower bound with with speed $t$ and rate $\bar\I$.
\end{theorem}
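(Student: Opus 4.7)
The plan is to establish the upper and lower bounds separately, exploiting the representation \eqref{1} for $\overline\mu_t$. This empirical measure naturally decomposes into a bulk contribution from the $N_t$ completed traversals, amenable to Sanov-type estimates on the i.i.d.\ speeds $(v_i)_{i\le N_t}$, and a boundary contribution from the incomplete final traversal on $[S_{N_t},t[$. Since $q\in[0,1[$ is automatically tight, exponential tightness in $\mc P([0,1[\times\R_+)$ reduces to the $p$-marginal and follows from a Chernoff bound applied to $\int p\,\overline\mu_t(dq,dp)=N_t/t+O(1/t)$, using that $\phi(e^{c/p})<+\infty$ for $c<\xi$.

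For the upper bound with rate $\I$, I would fix a small $\epsilon>0$ and split the indices $i\le N_t+1$ according to whether $v_i\ge\epsilon$ (``fast'') or $v_i<\epsilon$ (``slow''). For the fast part, a Sanov-type upper bound for the weighted empirical of the fast speeds, combined with the variational formula of Lemma~\ref{algfact}, identifies the term $\mu(p)\H(\tilde\pi|\phi)$ in the limit $\epsilon\downarrow 0$. For the slow part, the Markov estimate
\[
\phi([0,\epsilon])\le e^{-c/\epsilon}\phi(e^{c/p}),\qquad c<\xi,
\]
controls each slow event. If $k$ completed slow traversals accumulate time $\alpha_2 t$, then $k/\epsilon\approx \alpha_2 t$ and the total cost is $k\cdot c/\epsilon\approx c\alpha_2 t$, yielding $\alpha_2\xi$ as $c\uparrow\xi$. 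An incomplete terminal traversal with $v_{N_t+1}\sim \ell/(\alpha_3 t)$ yields $\alpha_3\ell^{-1}\xi$ by the same estimate. Exponential tightness promotes these neighborhood estimates into the full upper bound against closed sets.

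For the lower bound with rate $\bar\I$, Lemma~\ref{l:Idensity} reduces the problem to $\mu\in\bar\Omega$, so I may assume $\alpha_2=0$ and $\ell>0$. Given such $\mu$, I would construct the favorable event: (i) the first $n\approx \alpha_1\pi(p)t$ speeds have empirical distribution close to $\tilde\pi$ (Sanov cost $\alpha_1\pi(p)\H(\tilde\pi|\phi)=\mu(p)\H(\tilde\pi|\phi)$); (ii) $S_n$ is close to $\alpha_1 t$ by the law of large numbers under the tilt; (iii) $v_{n+1}$ lies in the narrow band $[(1-\delta),(1+\delta)]\ell/(\alpha_3 t)$, whose probability is at least $\exp(-(\bar\xi+o(1))\alpha_3 t/\ell)$ by the definition \eqref{e:xiphi2} of $\bar\xi$. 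By \eqref{1}, on this event $\overline\mu_t$ approximates $\mu$ arbitrarily well as $\delta\downarrow 0$ and $t\to+\infty$, and summing the two costs yields exactly $\bar\I(\mu)$.

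The subtlest step is the upper bound for the slow portion, where one must rule out that any configuration, other than the two canonical mechanisms (many completed slow traversals contributing to $\alpha_2\delta_0\otimes dq$, and a single incomplete terminal traversal contributing to $\alpha_3\delta_0\otimes\lambda_\ell$), can cheaply produce the $\delta_0$-component. A careful covering argument enumerating joint configurations of slow-speed values and durations, matched against the variational characterization of Lemma~\ref{algfact}, is the crucial technical step. The asymmetry $\bar\xi\ge\xi$ responsible for the possible failure of a genuine LDP arises precisely here: an incomplete terminal traversal demands a speed in a narrow band (cost governed by $\bar\xi$), whereas the bulk slow contribution and all upper-bound estimates can use the full interval $[0,\epsilon]$ (cost governed by the milder exponential moment $\xi$).
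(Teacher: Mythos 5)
Your lower bound plan is essentially the paper's own proof: tilt the first $\approx \alpha_1\pi(p)t$ speeds to $\tilde\pi$, condition $v_{n+1}$ to a shrinking band around $\ell/(\alpha_3 t)$ whose $\phi$-probability is governed by $\bar\xi$, bound the relative entropy of the tilted law, and invoke Lemma~\ref{l:Idensity} to reduce to $\bar\Omega$. The exponential tightness argument is also the same (via $\overline\mu_t(p) = N_t/t + O(1/t)$ and a Chernoff bound; note, though, that what one actually needs is $\E(e^{-\tau_1})<1$, which holds for any $\tau_1>0$, and not the existence of positive exponential moments governed by $\xi$).

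The upper bound is a different route from the paper, and there is a genuine gap in it. The paper bounds the free energy $\E_\phi\big(e^{t\overline\mu_t(f)}\big)$ directly for $f$ in the class $\Lambda$ of functions with $C_f=\phi\big(e^{\bar f(1,\cdot)/\cdot}\big)<1$ and $D_f<+\infty$: the renewal structure makes $\phi_f$ a probability and yields the uniform bound $\E_\phi\big(e^{t\overline\mu_t(f)}\big)\le D_f/(1-C_f)$ as a geometric series, while Lemma~\ref{l:3.4} exhibits test functions $f_{c,\delta,\varphi,m}\in\Lambda$ whose sup reproduces $\I(\mu)$; a minimax lemma then closes the argument. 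That device handles \emph{automatically} the random number of renewals $N_t$, the $\tau_i$-weighted structure of the empirical measure, and the arbitrary interleaving of slow and fast traversals. Your fast/slow split plus ``Sanov-type estimate'' plus ``covering argument'' has to reproduce all three by hand. You correctly compute the heuristic cost $c\alpha_2 t$ for $k$ slow traversals accumulating time $\alpha_2 t$, but the step from the one-sided tail bound $\phi([0,\epsilon])\le e^{-c/\epsilon}\phi(e^{c/p})$ to a rigorous upper bound on the probability of a prescribed slow-time budget requires discretizing over speed bins and durations and summing, and you explicitly leave that enumeration (``the crucial technical step'') undone. In particular, the claim that \emph{only} the two canonical mechanisms (many completed slow traversals, or one incomplete terminal one) can produce the $\delta_0$-mass at cost $\alpha_2\xi + \alpha_3\ell^{-1}\xi$ is exactly the content the variational bound over $\Lambda$ establishes, and your sketch gives no substitute for it. Until that covering argument is carried out, the upper bound is not proved.
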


\section{Upper bound at speed $t$}

\subsection{A heuristic argument}

Let us show the basic idea of the upper bound.
Let us suppose for simplicity that $\xi=0$.
We want to estimate from above for $\mc A$ a measurable subset of $\mc P([0,1[ \times \R_+)$
\[
\frac1t \log{\bf P}_t(\mc A) = \frac1t \log\E_\phi\left(
\un{(\overline\mu_t\in \mc A)} \, e^{-t\overline\mu_t(f)}\, e^{t\overline\mu_t(f)}\right)
\leq -\inf_{\pi\in \mc A} \pi(f)+\frac1t \log\E_\phi\left(e^{t\overline\mu_t(f)}\right)
\]
where we choose an arbitrary $f:\R_+\mapsto\R$ such that $\phi(e^{f/v})=1$.
For such $f$ one can see that
\[
\frac1t \log\E_\phi\left(e^{t\overline\mu_t(f)}\right)=0
\]and then we obtain
\[
\limsup_{t\to+\infty}\frac1t \log\bbP(\overline\mu_t\in \mc A)
\leq -\sup_{f: \, \phi(e^{f/p})=1} \left[\inf_{\pi\in \mc A} \pi(f)\right].
\]
By a minimax argument
\[
\sup_{f:\,\phi(e^{f/p})=1}\left[\inf_{\pi\in \mc A}\pi(f)\right]
=\inf_{\pi\in \mc A}\left[\sup_{f:\,\phi(e^{f/p})=1}\pi(f)\right]
=\inf_{\pi\in \mc A}\left[\sup_{\varphi:\,\phi(e^{\varphi})=1}\pi(p\varphi)\right].
\]
By Lemma~\ref{algfact}, the quantity in square brackets is equal to
$\pi(p)  \H(\tilde\pi \, |\, \phi)$ and we have the
upper bound.

\subsection{Exponential tightness}

\begin{lemma}
\label{l:3.2}
\begin{equation}
\label{e:onemom}
\lim_{M\to +\infty} \varlimsup_{t\to +\infty}
\frac{1}{t}  \log \bbP (\overline\mu_t(p)>M)=-\infty.
\end{equation}
In particular the sequence $({\bf P}_t)_{t>0}$ is exponentially tight
with speed $t$, namely
\begin{equation*}
\inf_{\mc K \subset \subset \mc P([0,1[\times \R_+)} \varlimsup_{t\to+\infty}
\frac{1}{t}  \log {\bf P}_t (\mc K)=-\infty.
\end{equation*}
\end{lemma}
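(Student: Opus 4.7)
The plan is to prove \eqref{e:onemom} first, by reducing $\overline\mu_t(p)$ to the renewal counting process $N_t$ and applying a Chernoff bound, and then to deduce exponential tightness by combining this control with a purely deterministic bound on the mass the $q$-marginal of $\overline\mu_t$ can place near $q=1$.

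For the first bound I would test the explicit formula \eqref{1} against $f(q,p)=p$, which gives the identity $\overline\mu_t(p) = N_t/t + v_{N_t+1}(t-S_{N_t})/t$. Since $v_{N_t+1}(t-S_{N_t})<1$ by the very definition of $N_t$, this yields $\overline\mu_t(p) \le (N_t+1)/t$, so it suffices to bound $\bbP(N_t \ge Mt-1)$. Writing $\{N_t\ge n\}=\{S_n\le t\}$ and applying Chernoff's inequality to the i.i.d.\ sum $S_n = \tau_1+\cdots+\tau_n$ with parameter $\lambda>0$, I obtain
\[
\bbP(N_t\ge n) \le e^{\lambda t}\,\phi(e^{-\lambda/p})^n.
\]
Because $\phi$ is supported in $\R_+^*$, $a_\lambda := \phi(e^{-\lambda/p})<1$ strictly. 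Plugging $n=\lceil Mt-1\rceil$ and taking the limsup yields $\tfrac{1}{t}\log\bbP(\overline\mu_t(p)>M) \le \lambda + M\log a_\lambda$, and letting $M\to+\infty$ at fixed $\lambda>0$ proves \eqref{e:onemom}.

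For the exponential tightness claim, the $p$-marginal is controlled by restricting to $\{\mu(p)\le M\}$ via Markov in $p$. The key remaining point is to prevent $\overline\mu_t$ from escaping towards $q=1$, since $[0,1[$ is not compact. Here I would rely on a purely deterministic computation: from \eqref{1} one reads off the $q$-marginal density
\[
\rho(q) = \frac{S_{N_t}}{t} + \frac{1}{t\,v_{N_t+1}}\,\un{[0,\,v_{N_t+1}(t-S_{N_t})[}(q),
\]
and a short case analysis using $v_{N_t+1}(t-S_{N_t})<1$ (which forces $\tau_{N_t+1}/t \le 1/(1-\delta)$ whenever the partial piece actually intersects $[1-\delta,1[$) produces the universal bound $\rho([1-\delta,1[) \le \delta + \delta/(1-\delta)$, valid pointwise in $\omega$ and in $t\ge 1$.

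The compact set is then assembled as follows. Picking continuous cutoffs $g_n\in C_\mathrm{b}([0,1[)$ with $\un{[1-1/n,1[}\le g_n \le \un{[1-2/n,1[}$ and constants $c_n := 2/n + (2/n)/(1-2/n)$, the set
\[
\mc K_M := \{\mu:\mu(p)\le M\} \cap \bigcap_{n\ge 3} \{\mu : \mu(g_n)\le c_n\}
\]
is closed (each defining inequality is a closed condition in the narrow topology) and tight in both coordinates (the $p$-part from Markov via $\mu(p)\le M$, the $q$-part from the $g_n$ cutoffs), hence compact by Prokhorov. The previous deterministic bound ensures $\{\overline\mu_t\notin \mc K_M\}\subseteq \{\overline\mu_t(p)>M\}$, and \eqref{e:onemom} then yields exponential tightness by choosing $M=M(L)$ large enough. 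The only mildly non-routine step is the deterministic $q$-tightness estimate; the Chernoff bound and the assembly are standard, so no serious obstacle is expected.
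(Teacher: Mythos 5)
Your proof of \eqref{e:onemom} is essentially the paper's: you also reduce $\overline\mu_t(p)\le (N_t+1)/t$ (using that $(t-S_{N_t})/\tau_{N_t+1}<1$) and run a Chernoff bound on $\{N_t\ge n\}=\{S_n\le t\}$. The paper takes the fixed tilting parameter $\lambda=1$, i.e.\ $\bbP(S_{\lfloor tM\rfloor}\le t)\le e^{t}\,\E(e^{-S_{\lfloor tM\rfloor}})$, whereas you keep $\lambda$ free; this is a cosmetic difference.

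Where you genuinely diverge is the deduction of exponential tightness. The paper disposes of it in one line by asserting that $\{\mu\in\mc P([0,1[\times\R_+):\mu(p)\le M\}$ is compact. As stated this is dubious: $[0,1[$ is not compact, and sublevel sets of $\mu\mapsto\mu(p)$ impose no tightness in the $q$-coordinate (e.g.\ $\delta_{1-1/n}\otimes\delta_0$ has $\mu_n(p)=0$ yet no narrow limit in $\mc P([0,1[\times\R_+)$). One can read the paper charitably as implicitly identifying $[0,1[$ with the torus $\bbT$ (the boundary-condition \eqref{boucon} and the use of $C_c(\bbT\times\R_+)$ support this), in which case the set is indeed compact. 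You instead supply the missing $q$-tightness directly, by reading off the $q$-marginal density $\rho(q)=S_{N_t}/t+(t v_{N_t+1})^{-1}\un{[0,v_{N_t+1}(t-S_{N_t})[}(q)$ from \eqref{1}, using $v_{N_t+1}(t-S_{N_t})<1$ and $S_{N_t}\le t$ to obtain the sample-pathwise bound $\rho([1-\delta,1[)\le \delta+\delta/(1-\delta)$, and then building the closed tight set $\mc K_M$ from this plus the $\mu(p)\le M$ constraint, so that $\{\overline\mu_t\notin\mc K_M\}\subseteq\{\overline\mu_t(p)>M\}$. This is correct, and it is more robust: it does not rely on the torus identification and keeps the argument honest on the literal state space $[0,1[\times\R_+$. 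A minor note: one should also observe that $\mu\mapsto\mu(p)$ is lower semicontinuous (so $\{\mu(p)\le M\}$ is closed) and that the $g_n$, being continuous and vanishing near $q=1$, extend to $C_b([0,1[\times\R_+)$, but you clearly have this in mind and there is no gap.
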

\begin{proof}
Note that by \eqref{1}, if $\lfloor tM\rfloor  \ge 1$
\begin{equation*}
\begin{split}
\{\overline\mu_t(p) > M\}
= \left\{\frac{N_t}t + \frac{t-S_{N_t}}{t\,\tau_{N_t+1}} > M \right\}
\subset \big\{N_t+1 > \lfloor t M\rfloor \big\}=
\left\{S_{\lfloor tM\rfloor }  \leq t \right\}
\end{split}
\end{equation*}
Therefore by  the Markov inequality
\begin{equation*}
\begin{split}
\bbP(\overline\mu_t(p) \ge M) \le
 \bbP\left(S_{\lfloor tM\rfloor } \le t\right)
\le e^t \, \E\left( e^{-S_{\lfloor tM\rfloor }}\right)
=  e^{t - \lfloor tM\rfloor  \log c}
\end{split}
\end{equation*}
where $c^{-1}:= \E\left( e^{-1/{v_1}} \right)<1$. The
inequality \eqref{e:onemom} follows by taking the $\limsup$ as $t\to
+\infty$ and $M\to +\infty$. Since for any $M>0$ the set $\{\mu \in
\mc P([0,1[\times \R_+)\,:\: \mu(p) \le M \}$ is compact,
exponential tightness follows.
\end{proof}

\subsection{Free energy}

For $f\in C_c(\bbT\times\R_+)$ we set
\[
\bar f(q,v):=\frac1q \int_0^q f(r,v)\,dr, \qquad (q,v)\in\,]0,1]\times\R_+.
\]
Let $\Lambda$ be the set of all bounded lower semicontinuous functions $f:[0,1[\times \R_+\mapsto\R$ 
such that
\begin{equation}\label{calpha}
C_f:=
\int_{]0,+\infty[} \phi(dv)\, e^{\bar f(1,v)/v} < 1
\end{equation}
and
\begin{equation}
\label{e:fsmall}
D_f:=\sup_{s>0} \int_{(0,1/s]} \phi(dv)\,e^{s\bar f(sv,v)}   <+\infty.
\end{equation}
\begin{prop}\label{mgf} For all $f\in\Lambda$
\begin{equation}\label{freeener}
\sup_{t>0} \, \E_\phi\left(e^{t\overline\mu_t(f)}\right) =
\sup_{t>0} \, \E_\phi\left(\exp\left(\int_0^tf(\overline q_s,\overline p_s)\, ds\right)\right)\leq \frac{D_f}{1-C_f} <+\infty.
\end{equation}
\end{prop}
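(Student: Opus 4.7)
The plan is to expand the expectation as a sum indexed by the renewal count $N_t$ and exploit the independence of the $v_i$'s. On the event $\{N_t=n\}=\{S_n\le t<S_{n+1}\}$, the trajectory $(\overline q_s,\overline p_s)$ is piecewise explicit: during the $i$-th full crossing ($i\le n$), $\overline p_s=v_i$ and $\overline q_s=v_i(s-S_{i-1})$, while on the final piece $[S_n,t]$, $\overline p_s=v_{n+1}$ and $\overline q_s=v_{n+1}(s-S_n)$. A direct change of variables shows that the $i$-th full crossing contributes $\int_0^{1/v_i} f(v_i u,v_i)\,du = \bar f(1,v_i)/v_i$ to $\int_0^t f(\overline q_s,\overline p_s)\,ds$, while the final partial piece contributes $s\,\bar f(sv_{n+1},v_{n+1})$, with $s:=t-S_n$; note that the event $\{N_t=n\}$ forces $v_{n+1}<1/s$, so $sv_{n+1}<1$ and $\bar f$ is evaluated at a legitimate argument.

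Next I would write
\[
\E_\phi\Big(e^{t\overline\mu_t(f)}\Big) = \sum_{n\ge 0} \E_\phi\!\left[\un{\{S_n\le t<S_{n+1}\}}\, \prod_{i=1}^n e^{\bar f(1,v_i)/v_i}\cdot e^{s\,\bar f(sv_{n+1},v_{n+1})}\right],
\]
and integrate out $v_{n+1}$ first, using its independence of $v_1,\dots,v_n$. Conditionally on $S_n=u\le t$ (so $s=t-u$), the $v_{n+1}$-integral reads $\int_{(0,1/s]} e^{s\,\bar f(sv,v)}\,\phi(dv)\le D_f$, uniformly in $s\ge 0$, by \eqref{e:fsmall}. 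Dropping the indicator $\un{\{S_n\le t\}}$ (which is bounded by $1$) and using independence of $v_1,\dots,v_n$, the remaining factor becomes $\E_\phi\big[e^{\bar f(1,v_1)/v_1}\big]^n=C_f^n$. Since $C_f<1$ by \eqref{calpha}, summing the geometric series yields
\[
\E_\phi\Big(e^{t\overline\mu_t(f)}\Big) \le D_f \sum_{n\ge 0} C_f^n = \frac{D_f}{1-C_f},
\]
uniformly in $t>0$, which is the claim.

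No step is expected to present a serious obstacle: the proof is essentially an elementary renewal computation. The one delicate point is the exact matching between the domain restriction $v\le 1/s$ built into the definition of $D_f$ and the natural restriction on $v_{n+1}$ induced by the event $\{S_n\le t<S_{n+1}\}$; this matching is precisely what makes the integrability condition \eqref{e:fsmall} sufficient to control the contribution of the incomplete final crossing, and thereby lets the geometric bound close the estimate.
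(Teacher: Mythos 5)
Your proof is correct and follows essentially the same route as the paper: decompose by the renewal count $N_t$, identify the contribution $\bar f(1,v_i)/v_i$ of each full crossing and $s\,\bar f(sv_{n+1},v_{n+1})$ of the final partial one, integrate out $v_{n+1}$ using \eqref{e:fsmall} to get the factor $D_f$, and bound the remaining product by $C_f^n$. The only cosmetic difference is that the paper normalizes $e^{\bar f(1,v)/v}\,\phi(dv)$ into a tilted probability $\phi_f$ and introduces the law $\zeta_n$ of $S_n$ under $\phi_f$, whereas you simply drop the indicator $\un{\{S_n\le t\}}$ and invoke independence directly — both yield the same geometric bound $D_f\sum_n C_f^n$.
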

\begin{proof} Since $C_f>0$, we can introduce the
probability measure
\[
\phi_f(dv):=\frac1{C_f}\,
\phi(dv) \, e^{\frac{\bar f(1,v)}v}
\]
and denote by $\zeta_n$ the law of
$S_n$ under $\bbP_{\phi_f}$.
Then we can write by \eqref{1}
\[
\begin{split}
& \E_\phi\left(\exp\left(\int_0^tf(\overline q_s,\overline p_s)\, ds\right)\right) =
\E_\phi\left(\un{(N_t=0)}\exp\left( t\bar f({tv_1},v_{1})\right) \right)
\\ & +
\sum_{n=1}^{\infty}
\E_\phi\left(\un{(N_t=n)}\exp\left(\sum_{i=1}^n \frac{\bar f(1,v_i)}{v_i}\,
+ (t-S_{n}) \, \bar f((t-S_{n})v_{n+1},v_{n+1})\right)\right)
\\ & = \int_0^{\frac1t} \phi(dv)\, e^{t \, \bar f({tv},v)}
+\sum_{n=1}^{\infty}\int_0^t C_f^n\, \zeta_n(ds) \int_{]0,1/(t-s)[} \phi(dv) \, e^{(t-s) \,
\bar f((t-s)v,v)}
\\ & \leq D_f \sum_{n=0}^{\infty} C_f^n = \frac{D_f}{1-C_f}.
\end{split}
\]
\end{proof}

\begin{lemma}\label{l:3.4}
For all $\mu \in \Omega$ 
  \begin{equation} \label{e:muf}
    \sup_{f\in\Lambda} \mu(f)      \ge \I(\mu).
  \end{equation}
\end{lemma}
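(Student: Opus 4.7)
Let $\mu \in \Omega$ be given by \eqref{e:muomega}. The decomposition $\I(\mu) = \alpha_1 \pi(p)\H(\tilde\pi\,|\,\phi) + (\alpha_2 + \alpha_3 \ell^{-1})\xi$ suggests splitting each test function into two pieces: one linear in $v$, which will recover the entropy contribution via Lemma~\ref{algfact}, and one concentrated near $v=0$ whose $q$-profile distinguishes $\alpha_2$ (uniform in $q$) from $\alpha_3$ (localized in $[0,\ell[$).

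\textbf{Construction.} For a compactly supported continuous $\varphi : \R_+ \to \R$ satisfying $\phi(e^\varphi) < 1$, a constant $c \in [0, \xi)$, a small $\delta > 0$, and $\ell' \in (0, 1)$, I will use
\[
f(q, v) := v\,\varphi(v) + c\, \rho_\delta(v)\, h_{\ell'}(q),
\]
where $\rho_\delta \in C(\R_+; [0,1])$ satisfies $\rho_\delta(0) = 1$ and $\rho_\delta \equiv 0$ on $[\delta, +\infty[$, and $h_{\ell'}(q) := (\ell')^{-1} \un{[0,\ell'[}(q)$. Compact support of $\varphi$ makes $v\varphi(v)$ bounded continuous; $h_{\ell'}$ is lower semicontinuous as the indicator of an open subset of $[0,1[$; so $f$ is bounded and lower semicontinuous.

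\textbf{Verifying the constraints and taking limits.} Using $\int_0^1 h_{\ell'} = 1$, one gets $\bar f(1, v) = v\varphi(v) + c\rho_\delta(v)$; since $\rho_\delta$ vanishes outside $[0, \delta]$ and $c<\xi$ implies $e^{c/v} \in L^1(\phi)$, dominated convergence yields $C_f = \phi(\exp(\varphi + c\rho_\delta(\cdot)/\cdot)) \to \phi(e^\varphi) < 1$ as $\delta \downarrow 0$, hence $C_f < 1$ for $\delta$ small. A direct computation based on $\int h_{\ell'}(q)\,\lambda_\ell(dq) = \ell^{-1}$ (when $\ell'=\ell>0$) gives
\[
\mu(f) = \alpha_1\, \pi(p\varphi) + c\, \alpha_1\, \pi(\rho_\delta) + \alpha_2\, c + \alpha_3\, c\, \ell^{-1}.
\]
Since $\pi(\{0\}) = 0$, the term $\alpha_1 c\, \pi(\rho_\delta)$ vanishes as $\delta \downarrow 0$; taking then $c \uparrow \xi$ and $\varphi$ so as to approximate the maximizer of Lemma~\ref{algfact} produces $\sup_f \mu(f) \ge \I(\mu)$. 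The corner case $\ell=0$ is handled by keeping $\ell' > 0$ throughout and letting $\ell' \downarrow 0$ only at the end, so that $\alpha_3 c(\ell')^{-1} \to +\infty$ in accordance with $\I(\mu)=+\infty$; the case $\xi = +\infty$ is treated similarly by letting $c\uparrow +\infty$.

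\textbf{Main obstacle.} The technically delicate step is checking $D_f < +\infty$. Computing $\bar f(sv, v) = v\varphi(v) + c\rho_\delta(v)\, \bar h_{\ell'}(sv)$ with $\bar h_{\ell'}(q) = (\ell')^{-1}$ for $q \le \ell'$ reveals that the integrand $e^{s\bar f(sv, v)}$ acquires a factor $e^{sc/\ell'}$ on the region $v \in (0, \min(\delta, \ell'/s)]$, which explodes as $s \to +\infty$. The cure is the Chebyshev-type bound $\phi([0, r]) \le e^{-c'/r}\, \phi(e^{c'/v})$ applied with an auxiliary $c' \in (c, \xi)$: the strict gap $c < c' < \xi$ is essential to absorb the exponential blow-up. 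The remaining regimes reduce either to boundedness of $v\varphi(v)$ (using $sv \le 1$) or to $\phi(e^{c/v}) < +\infty$.
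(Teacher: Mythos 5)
Your proof is correct and takes essentially the same route as the paper. The paper uses the test functions $f_{c,\delta,\varphi,m}(r,p)=p\varphi(p)+c\,\un{[0,\delta[}(p)\,m^{-1}\un{[0,m[}(r)$ with $\varphi\in C_c(\R_+)$, $c<\xi$, $m\in(0,1)$; your $v\varphi(v)+c\,\rho_\delta(v)\,h_{\ell'}(q)$ differs only in using a continuous cutoff $\rho_\delta$ in place of $\un{[0,\delta[}$ and in immediately tying the auxiliary length $\ell'$ to $\ell$. The computations of $\bar f(1,v)$, of $C_f$, of $\mu(f)$ and the final passage to the supremum via Lemma~\ref{algfact} and $c\uparrow\xi$ match the paper step by step, as does your treatment of the corner cases $\ell=0$ and $\xi=+\infty$.

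One small remark on the part you flag as the main obstacle. You do not actually need the auxiliary $c'\in(c,\xi)$ nor the Chebyshev bound on $\phi([0,r])$: on the region $sv\le\ell'$ the very inequality defining that region gives $s\le \ell'/v$, hence $sc\,\rho_\delta(v)/\ell'\le sc/\ell'\le c/v$, while on $\ell'<sv\le 1$ one has $sc\,\rho_\delta(v)\,\bar h_{\ell'}(sv)=c\,\rho_\delta(v)/v\le c/v$. So in every regime $s\bar f(sv,v)\le\|\varphi\|_\infty+c/v$ pointwise, and $D_f\le e^{\|\varphi\|_\infty}\,\phi(e^{c/v})<+\infty$ directly; this is the bound the paper uses. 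Your Chebyshev argument is valid but replaces an elementary pointwise estimate with a two-step one, and the ``strict gap $c<c'<\xi$'' you describe as essential is in fact not needed.
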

\begin{proof}
  For $\varphi \in C_\mathrm{c}(\R_+)$, $c<\xi$,
  $\delta>0$ and $\,m \in (0,1)$, let
 \begin{equation*}
   f_{c,\delta,\varphi,m}(r,p):= p \varphi(p) + c \un {[0,\delta[}(p) \frac{\un {[0,m[}(r)}m, \qquad (r,p)\in\bbT\times\R_+.
 \end{equation*}
Then
\[
 \frac 1p\, \bar f_{c,\delta,\varphi,m}(1,p) =  \varphi(p) + \frac{c}{p} \un {[0,\delta[}(p), \qquad p\in\R_+^*.
\]
Notice that
\[
  \begin{split}
&   \int_{(0,1/s]} \phi(dv)\,e^{s\bar f_{c,\delta,\varphi,m}(sv,v)} = \int_{(0,1/s]} \phi(dp)\,\exp
    \Big(\int_{[0,ps)}\!dr\,\frac{f_{c,\delta,\varphi,m}(p,r)}{p}\Big)
\\ & = \int_{(0,1/s]}
    \phi(dp)\,\exp \Big( ps\,\varphi(p) + \frac{c}{p} \un
    {[0,\delta[}(p) \frac{(ps)\wedge m }{m} \Big)
    \le  e^{\|\varphi\|_{\infty}} \int_{(0,+\infty)} \phi(dp)\, e^{1 \vee
(c/p)}
 \end{split}
\]
which is bounded uniformly in $s$, so that \eqref{e:fsmall} holds for
$f=f_{c,\delta,\varphi,m}$. Let now $a<1$. If
\begin{equation}\label{e:varphismall}
\phi(e^\varphi) =a<1
\end{equation}
then there exists $\delta_0=\delta_0(c,\varphi)$ such that for all $\delta<\delta_0$
\[
C_{f_{c,\delta,\varphi,m}} = \phi\left( e^{\varphi + \frac{c}{p} \un {[0,\delta[}}\right)<1
\]
and therefore $f_{c,\delta,\varphi,m}\in\Lambda$.
Now, if $\mu$ is given by \eqref{e:muomega} then
\[
\mu(f_{c,\delta,\varphi,m}) =
 {\alpha_1}\pi(p\varphi) + c\,\alpha_2 +
 c\,\alpha_3 \,\min\left\{\frac1{\ell }, \frac1{m}\right\}.
 \]
 Since $\pi(p\varphi)=\mu(p) \,\tilde\pi(\varphi)$ then
 \begin{equation*}
 \begin{split}
      \sup_{f\in\Lambda} \mu(f) &  \geq
    \sup_{\varphi} \sup_{c,m}\sup_{\delta} \mu(f_{c,\delta,\varphi,m})
    \\
 & = \mu(p)\, \sup_{\varphi} \left\{\tilde\pi(\varphi)-\log\phi(e^\varphi)\right\} + \mu(p)\, \log a+(\alpha_2+\alpha_3 \ell^{-1})
\, \xi
\end{split}
\end{equation*}
(with the usual convention $0\cdot\infty=0$)
where the supremum on $\delta$ is performed
over $[0,\delta_0(c,\varphi)[$, the supremum on $(c,m)$ over $[0,\xi[\, \times\,
[0,1[$ and the supremum on $\varphi$ over $\varphi \in C_\mathrm{c}(\R_+)$
satisfying \eqref{e:varphismall}. By Lemma~\ref{algfact} the supremum over $\varphi$ satisfying
\eqref{e:varphismall} does not depend on $a$ and equals $\mu(p)\H(\tilde\pi|\phi)$, so that
finally
\[
\sup_{f\in\Lambda} \mu(f)   \geq \sup_{a<1} \left\{ \mu(p)\H(\tilde\pi|\phi)+(\alpha_2+\alpha_3 \ell^{-1})\xi
 +\mu(p)\, \log a\right\} = \I(\mu).
\]
\end{proof}

\begin{proof}[Proof of Theorem~\ref{t:ld1}, upper bound]
For $M>0$, let $\Omega_M=\big\{ \mu \in \Omega\,:\: \mu(p)\le M\big\}$
and
\begin{equation*}
  R_M:= -\varlimsup_{t \to +\infty} \frac{1}{t} \log {\bf P}_t(\Omega_M^c).
\end{equation*}
For $\mc A$ measurable subset of $\mc P([0,1[ \times \R_+)$ and
for $f \in \Lambda$, by \eqref{freeener},
\[
\begin{split}
    \frac{1}{t} \log {\bf P}_t(\mc A) & =
      \frac{1}{t} \log
        \E_\phi\left( e^{t \overline\mu_t(f)}
      e^{-t \overline\mu_t(f)} \un {\mc A} \right)
\le \frac{1}{t} \log\left[   e^{ -t\inf_{\mu \in \mc A}\mu(f)} \,
\E_\phi\left( e^{t \overline\mu_t(f)} \right) \right]
\\ & \leq -\inf_{\mu \in \mc A} \mu(f) +\frac{1}{t} \log\frac{D_f}{1-C_f}
    \end{split}
\]
and therefore
  \begin{equation}\label{e:ldmeas}
      \varlimsup_{t \to + \infty} \frac{1}{t} \log {\bf P}_t(\mc A)
\le  -\inf_{\mu \in \mc A} \mu(f).
\end{equation}
Let now $\mc O$ be
an open subset of $\mc P([0,1[ \times \R_+)$. Then applying
\eqref{e:ldmeas} for $\mc A=\mc O \cap \Omega_M$
  \begin{equation*}
    \begin{split}
&      \varlimsup_{t\to+\infty} \frac{1}{t} \log {\bf P}_t(\mc O) \le
      \varlimsup_{t\to+\infty} \frac{1}{t} \log\big[2 \max({\bf P}_t
      (\mc O \cap \Omega_M),{\bf P}_t(\Omega_M^c) )\big]
\\ &
\le \max\left(-\inf_{\mu \in \mc O \cap \Omega_M} \mu(f), -R_M\right)=
-\inf_{\mu \in \mc O \cap \Omega_M} \mu(f) \wedge R_M
    \end{split}
\end{equation*}
which can be restated as
\begin{equation}\label{e:ldopen}
  \varlimsup_{t\to +\infty}  \frac{1}{t} \log {\bf P}_t(\mc O) \le
- \inf_{\mu \in \mc O} \I_{f,M}(\mu)
\end{equation}
for any open set $\mc O$ and any $f \in \Lambda$ and $M>0$,
where the functional $\I_{f,M}$ is defined as
\begin{equation*}
  \I_{f,M}(\mu):=
  \begin{cases}
    \mu(f) \wedge R_M & \text{if $\mu \in \Omega_M$}
\\ +\infty & \text{otherwise}
  \end{cases}
\end{equation*}
Since $f$ is lower semicontinuous and $\Omega_M$ compact, $I_{f,M}$ is lower
semicontinuous. By minimizing \eqref{e:ldopen} over $f\in \Lambda$ and $M>0$ we obtain
for every open set $\mc O$
\[
\varlimsup_{t\to +\infty}  \frac{1}{t} \log {\bf P}_t(\mc O) \le
- \sup_{f,M}\inf_{\mu \in \mc O} \I_{f,M}(\mu)
\]
and by applying the minimax lemma
\cite[Appendix 2.3, Lemma 3.3]{KL}, we get that for every compact set $\mc K$
\[
\varlimsup_{t\to +\infty}  \frac{1}{t} \log {\bf P}_t(\mc K) \le
- \inf_{\mu \in \mc O} \sup_{f,M} \I_{f,M}(\mu)
\]
i.e.\ $({\bf P}_t)_{t\geq 0}$ satisfies
a large deviations upper bound on compact sets with speed $t$ and rate
\begin{equation*}
\tilde{\I}(\mu):=  \sup \{\I_{f,M}(\mu),\,M>0,\,f \in \Lambda\}, \qquad
\mu\in\cP([0,1[\,\times[0,+\infty[),
\end{equation*}
and since $\lim_{M\to +\infty} R_M=+\infty$ by Lemma~\ref{l:3.2}
\begin{equation*}
\tilde{\I}(\mu)\ge  \sup \{\I_{f}(\mu),\,f \in \Lambda\}
\end{equation*}
where
\begin{equation*}
  \I_{f}(\mu):=
  \begin{cases}
    \mu(f) & \text{if $\mu \in \Omega$}
\\ \\
+\infty & \text{otherwise}
  \end{cases}
\end{equation*}
Thus $\tilde{\I}(\mu) \ge \I(\mu)$ by Lemma~\ref{l:3.4}. Therefore
$({\bf P}_t)_{t\geq 0}$ satisfies a large deviations upper bound with rate $\I$ on
compact sets, and by the exponential tightness proved in
Lemma~\ref{l:3.2}, it satisfies the full large deviations upper bound on closed sets.
\end{proof}

\section{Lower bound at speed $t$}

We are going to prove the lower bound in Theorem~\ref{t:ld1}.

\begin{prop}
For every $\mu\in\Omega$ there exists a family ${\bf Q}_t$ of
probability measures on $\mc P([0,1[\times \R_+)$ such that
${\bf Q}_t\rightharpoonup\delta_\mu$
and
\[
\varlimsup_{t\to+\infty} \frac1t  \H({\bf Q}_t \,| \, {\bf P}_t)\leq \bar\I(\mu).
\]
\end{prop}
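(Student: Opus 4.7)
The plan is to build a tilted law $\mathbb{Q}_t$ on the i.i.d.\ speed sequence $(v_i)_{i\geq 1}$ and push it forward through the map $(v_i)_i\mapsto\overline\mu_t$ to obtain ${\bf Q}_t$; by the data-processing inequality $\H({\bf Q}_t\,|\,{\bf P}_t)\leq \H(\mathbb{Q}_t\,|\,\bbP_\phi)$, so it is enough to control the entropy on the product space. First, Lemma~\ref{l:Idensity} enables a diagonal reduction to $\mu\in\bar\Omega$: if the conclusion is established for each element of a sequence $\mu_n\in\bar\Omega$ with $\mu_n\rightharpoonup\mu$ and $\bar\I(\mu_n)\to\bar\I(\mu)$, then choosing $n(t)\to\infty$ slowly and setting ${\bf Q}_t:={\bf Q}_t^{(n(t))}$ gives the required family for $\mu$. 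So I may assume $\mu=\alpha_1\pi(dp)\,dq+\alpha_3\delta_0(dp)\,\lambda_\ell(dq)$ with $\alpha_1+\alpha_3=1$, $\pi\in\mc P(\R_+^*)$ satisfying $\pi(p)<+\infty$, and $\ell\in\,]0,1]$, so that $\bar\I(\mu)=\mu(p)\,\H(\tilde\pi\,|\,\phi)+\alpha_3\ell^{-1}\bar\xi$.

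Under $\mathbb{Q}_t$, I make the $v_i$ independent with marginals $v_i\sim\tilde\pi$ for $i\leq K_t:=\lfloor\alpha_1 t\,\pi(p)\rfloor$; $v_{K_t+1}\sim\rho_t:=\phi(\,\cdot\,\cap J_t)/\phi(J_t)$, where $J_t:=[\gep_t(1-\delta_t),\gep_t(1+\delta_t)[$ with $\gep_t:=\ell/(\alpha_3 t)$ and $\delta_t\downarrow 0$ chosen slowly; and $v_i\sim\phi$ for $i>K_t+1$ (these are irrelevant for $\overline\mu_t$). By the strong law $S_{K_t}/K_t\to\tilde\pi(1/p)=1/\pi(p)$ in probability, so $S_{K_t}=\alpha_1 t+o_\bbP(t)$, and the contribution to $\overline\mu_t$ of the first $K_t$ completed crossings converges in probability to $\alpha_1\pi(dp)\,dq$, by the same argument as in Proposition~\ref{limit}. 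The $(K_t{+}1)$-th crossing starts at position $0$ with speed $\gep_t(1+o(1))$, does not complete before time $t$, and reaches position $v_{K_t+1}(t-S_{K_t})\to\ell$ in probability, so it contributes $\alpha_3\delta_0(dp)\,\lambda_\ell(dq)$. Hence ${\bf Q}_t\rightharpoonup\delta_\mu$.

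The product structure gives $\H(\mathbb{Q}_t\,|\,\bbP_\phi)=K_t\,\H(\tilde\pi\,|\,\phi)+\H(\rho_t\,|\,\phi)$. The first summand, divided by $t$, tends to $\alpha_1\pi(p)\,\H(\tilde\pi\,|\,\phi)=\mu(p)\,\H(\tilde\pi\,|\,\phi)$. For the second, $\H(\rho_t\,|\,\phi)=-\log\phi(J_t)$; unfolding the definition of $\bar\xi$, for every $\eta>0$ there exists $\delta_0>0$ such that for each $\delta<\delta_0$ one has $\phi([\gep(1-\delta),\gep(1+\delta)[)\geq e^{-(\bar\xi+\eta)/\gep}$ for every $\gep$ small enough, so choosing $\delta_t\downarrow 0$ slowly enough to apply this lower bound at $\gep=\gep_t$ yields $\H(\rho_t\,|\,\phi)/t\leq\alpha_3\ell^{-1}\bar\xi+o(1)$. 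Altogether $\varlimsup_t\H(\mathbb{Q}_t\,|\,\bbP_\phi)/t\leq\bar\I(\mu)$.

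The main obstacles are (i) correctly extracting the $\bar\xi$ bound from the $\varliminf$ in the definition of $\bar\xi$, which requires a careful coupling between $\delta_t\downarrow 0$ and the smallness threshold on $\gep$; (ii) checking that the $o_\bbP(t)$-fluctuations of $S_{K_t}$ around $\alpha_1 t$ do not disturb the convergence of the stage-$3$ contribution, since both the final position $v_{K_t+1}(t-S_{K_t})$ and the time weight $(t-S_{K_t})/t$ depend on $S_{K_t}$; and (iii) executing the diagonal step in the reduction to $\bar\Omega$ so that both the weak convergence ${\bf Q}_t\rightharpoonup\delta_\mu$ and the $\varlimsup$ entropy bound are preserved jointly.
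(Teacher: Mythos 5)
Your proposal is correct and follows essentially the same route as the paper: tilt the i.i.d.\ law so that $v_i\sim\tilde\pi$ for roughly $\mu(p)\,t$ indices, then force one exceptionally slow speed concentrated near $\ell/(\alpha_3 t)$ in a window of shrinking relative width, bound $\H({\bf Q}_t\,|\,{\bf P}_t)$ by the product-space relative entropy via data processing, extract $\bar\xi$ from the $\varliminf$ in its definition, and finish with Lemma~\ref{l:Idensity} plus a diagonal argument for general $\mu\in\Omega$. The only cosmetic differences are the order of operations (you reduce to $\bar\Omega$ first, while the paper constructs on $\bar\Omega$ first and then diagonalizes) and that you build $\delta_t\downarrow 0$ into the window directly rather than first fixing $\delta$ and diagonalizing afterwards, both of which you correctly flag as requiring the same care the paper exercises.
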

\begin{proof} Let us first suppose that $\mu\in\bar\Omega$ as in \eqref{e:Omegabar}, i.e.\
\begin{equation}\label{e:1.1.ug}
\mu(dq,dp)=\alpha\, \pi(dp)\, dq +
(1-\alpha)\, \lambda_\ell(dq)\, \delta_0(dp)
\end{equation}
with $\alpha\in[0,1]$ and $\pi\in\cP(\R_+)$, $m:=\pi(p)\in\R^*_+$, $\ell\in\,]0,1[$.
Notice that $\mu(p)=\alpha\,\pi(p)\in\R_+$.

Suppose first that $\mu(p)=0$, i.e.\ $\alpha=0$. In this case, we define by 
$\bbP^{t,\delta}$ the law on $\R_+^{\N^*}$ such that under
$\bbP^{t,\delta}$ the sequence $(v_i)_{i\geq 1}$ is independent and
\begin{enumerate}
\item $v_{1}$ has distribution
\[
\bbP^{t,\delta}\left(v_{1}\in dv \right) =
\phi\left(dv\, \big|\, K_t\right), \qquad K_t:= \left[\frac{\ell(1-\delta)}{ t},
  \frac{\ell(1+\delta)}{ t} \right[ 
\]
\item for all $i\geq 2$, $v_i$ has law $\phi$.
\end{enumerate}
Under $\bbP^{t,\delta}$, $\overline\mu_t$ is a.s. equal to
\[
\overline\mu_t=\frac{\un{[0,tv_{1}]}(dq)}{tv_{1}}\, \delta_{v_{1}}(dp).
\]
Let us set ${\bf Q}_{t}:=\bbP^{t,\delta}\circ\overline\mu_t^{-1}$. Then we have
\begin{equation*}
\lim_{\delta\downarrow 0}\lim_{t\uparrow +\infty} {\bf Q}_{t} = \delta_\mu.
\end{equation*}
Moreover
\[
\H({\bf Q}_t \,| \, {\bf P}_t) \leq \H\left(\bbP^{t,\delta} \,| \, \bbP_\phi\right)=-\log\phi(K_t),
\]
so that
\[
\varlimsup_{t\to+\infty} \frac1t \H({\bf Q}_t \,| \, {\bf P}_t) \leq - \lim_{\delta \downarrow 0} \varliminf_{\eps \downarrow 0}
\frac\eps\ell\, \log \phi( [\eps(1-\delta),\eps(1-\delta)) ) = \ell^{-1}\, {\overline\xi}=\overline\I(\mu).
\]

We suppose now that $\alpha\in\,]0,1[$ and therefore $\mu(p)=\alpha\pi(p)>0$.
We set $T_t:= \lfloor\mu(p)\, t\rfloor$ and we suppose that
$t\geq 1/\mu(p)$. Let us also fix $\delta\in\,]0,(1-\alpha)/2[$.
Let us define by $\bbP^{t,\delta}$ the law on $\R_+^{\N^*}$ such that under
$\bbP^{t,\delta}$ the sequence $(v_i)_{i\geq 1}$ is independent and
\begin{enumerate}
\item for all $i\leq T_t$, $v_i$ has law $\tilde\pi$
\item $v_{T_t+1}$ has distribution
\[
\bbP^{t,\delta}\left(v_{T_t+1}\in dv \right) =
\phi\left(dv\, \big|\, K_t\right), \qquad K_t:=\Big[\frac{\ell-\delta}{(1-\alpha-\delta) t},
  \frac{\ell+\delta}{(1-\alpha+\delta) t} \Big[ 
\]
\item for all $i\geq T_t+2$, $v_i$ has law $\phi$.
\end{enumerate}
Let us set ${\bf Q}_{t}:=\bbP^{t,\delta}\circ\overline\mu_t^{-1}$. Let us prove now that
\begin{equation}\label{sieg}
\lim_{\delta\downarrow 0}\lim_{t\uparrow +\infty} {\bf Q}_{t} = \delta_\mu.
\end{equation}
By the law of large numbers, under $\bbP_{\tilde\pi}$ we have a.s.
\[
\lim_{t\to+\infty} \frac{S_{T_t}}t =
\lim_{t\to+\infty} \frac{S_{T_t}}{T_t} \,\frac{T_t}t=
\frac1{\mu(p)}\, {\alpha\mu(p)}=\alpha<1.
\]
However $S_{T_t}$
has the same law under $\bbP_{\tilde\pi}$ and under $\bbP^{t,\delta}$, so we obtain
\begin{equation}\label{mire}
\lim_{t\to+\infty} \bbP^{t,\delta}\left(\left|\frac{S_{T_t}}t-\alpha\right|\geq \delta \right)=
\lim_{t\to+\infty} \bbP_{\tilde\pi}\left(\left|\frac{S_{T_t}}t-\alpha\right|\geq \delta \right)=0.
\end{equation}
Therefore, if we set
\[
A_t:=\left\{\left|\frac{S_{T_t}}t-\alpha\right|\leq \delta, \
S_{T_t+1}> t, \ \left|v_{T_t+1}(t-S_{T_t})-\ell\right|\leq \delta \right\}
\]
then, by \eqref{mire} and
by the definition of $K_t$ above, we obtain that
for all $\delta\in\,]0,(1-\alpha)/2[$
\[
\lim_{t\to+\infty} \bbP^{t,\delta}\left(A_t \right)=1.
\]
Moreover on $A_t$ we have $N_t=T_t$ and therefore by \eqref{1} on $A_t$
\begin{equation}\label{ille}
\overline\mu_t(dq,dp)=\frac{dq}t\sum_{i=1}^{T_t} \frac{\delta_{v_i}(dp)}{v_i} 
+\lambda_{v_{T_t+1}(t-S_{T_t})}(dq) 
\, \delta_{v_{T_t+1}}(dp).
\end{equation}
Then for any $f\in C_\mathrm{b}([0,1]\times\R_+)$ we have
\[
\bbP^{t,\delta}(|\overline\mu_t(f)-\mu(f)|>\gep)\leq\bbP^{t,\delta}(\{|\overline\mu_t(f)-\mu(f)|>\gep\}\cap A_t)+
\bbP^{t,\delta}(A_t^c)
\]
and we already know that $\bbP^{t,\delta}(A_t^c)\to 0$
as $t\to+\infty$. Now, by \eqref{ille}, by the law
of the large numbers and by the definition of $A_t$
\[
\lim_{\delta\downarrow 0}\lim_{t\to+\infty} \bbP^{t,\delta}\left(\{|\overline\mu_t(f)-\mu(f)|>\gep\}\cap A_t \right)=0
\]
and we obtain \eqref{sieg}. 

Now we estimate the entropy
\begin{equation}
\label{e:Hbound}
 \begin{split}
 \H({\bf Q}_t \,| \, {\bf P}_t)  & \le 
 \H\left(\bbP^{t,\delta} \, |\, \bbP_{\phi}\right)
= T_t \H(\tilde\pi|\phi) 
-  \log \phi( K_t ),
\end{split}
\end{equation}
so that
\begin{equation*}
 \begin{split}
\lim_{\delta \downarrow 0} \varlimsup_{t\uparrow +\infty}
\frac 1t \H({\bf Q}_t \,| \, {\bf P}_t)  & \le  \mu(p) \H(\tilde\pi \, | \, \phi)
-(1-\alpha) \ell^{-1}  \lim_{\delta \downarrow 0} \varliminf_{\eps \downarrow 0}
\eps \log \phi( [\eps(1-\delta),\eps(1-\delta)[ )
\\ & =  \mu(p) \H(\tilde\pi \, | \, \phi)+(1-\alpha) \ell^{-1}\overline\xi = \overline\I(\mu).
\end{split}
\end{equation*}
Then there exists a map $t\mapsto \delta(t)>0$ vanishing as $t \uparrow +\infty$ such that
${\bf Q}_t:= {\bf Q}_{t,\delta(t)} \to \delta_\mu$ and $\varlimsup_t t^{-1}
\H({\bf Q}_t \,| \, {\bf P}_t) \le \overline\I(\mu)$.

Let now $\mu\in\Omega\setminus\bar\Omega$. Then, by Lemma~\ref{l:Idensity}
we can find a sequence $(\mu_n)_n$ in $\bar{\Omega}$
such that $\mu_n\rightharpoonup\mu$ and $\varlimsup_n \overline\I(\mu_n) \le \overline\I(\mu)$. Moreover, we now know that there exists for all $n$
 a family ${\bf Q}^n_t$ of
probability measures on $\mc P([0,1[\times \R_+)$ such that
${\bf Q}_t^n\rightharpoonup\delta_{\mu_n}$
and
\[
\varlimsup_{t\to+\infty} \frac1t \, \H({\bf Q}_t^n \,| \, {\bf P}_t)\leq \bar\I(\mu_n).
\]
With a standard diagonal procedure we can find a family ${\bf Q}_t$ such that
${\bf Q}_t\rightharpoonup\delta_\mu$
and
\[
\varlimsup_{t\to+\infty} \frac1t \, \H({\bf Q}_t \,| \, {\bf P}_t)\leq \bar\I(\mu).
\]
\end{proof}

\section{Optimality of the bounds}
In this section we show that, for $\xi<\overline\xi$, the law of $\overline\mu_t$
satisfies a large deviations lower bound and a large deviations upper bound with {\it different} rate functionals. 

Let us set 
$\gamma \in  \mc P([0,1[\,\times]0,+\infty[)$ as
\begin{equation*}
\gamma(dq,dp):= 
\begin{cases}
\frac1{\phi(1/p)}\, \frac1p\, \phi(dp)\, \lambda_1(dq)
& \text{if $\phi(1/p)=1/\psi(\tau)\in\, ]0,+\infty[$}
\\
\delta_0(dp)\, \lambda_1(dq) & \text{if $\psi(\tau)=+\infty$}
\end{cases}
\end{equation*}
For $\alpha,\,\ell \in \,]0,1[$ and $\delta >0$, let $\mc A^{\alpha,\ell}_{\delta}$ be the ball of radius $\delta$ in $\mc P(\mc X \times [0,1[)$ centered at $\alpha \gamma + (1-\alpha) \lambda_\ell$ with respect to the standard Prohorov distance. We want to prove that there exist subsequences $(t_k)_k$ and $(s_k)_k$ such that
\[
\lim_{\delta \to 0} \lim_k \frac{1}{t_k} \log \mb P_{t_k} \left( \mc A^{\alpha,\ell}_{\delta}\right) = - (1-\alpha) \ell^{-1} \xi,
\]
\[
\lim_{\delta \to 0} \lim_k \frac{1}{s_k} \log \mb P_{s_k} \left( \mc A^{\alpha,\ell}_{\delta}\right) = - (1-\alpha) \ell^{-1} \bar \xi.
\]
Since the upper and the lower bound are proved, it is enough to prove that there exist subsequences $(t_k)_k$ and $(s_k)_k$ such that
\begin{equation}
\label{e:UBsharp}
\lim_{\delta \to 0} \varliminf_k \frac{1}{t_k} \log \mb P_{t_k} \left( \mc A^{\alpha,\ell}_{\delta}\right) \geq - (1-\alpha) \ell^{-1} \xi,
\end{equation}
\begin{equation}
\label{e:LBsharp}
\lim_{\delta \to 0} \varliminf_k \frac{1}{s_k} \log \mb P_{s_k} \left( \mc A^{\alpha,\ell}_{\delta}\right) \leq - (1-\alpha) \ell^{-1} \bar \xi.
\end{equation}

The inequality \eqref{e:UBsharp} follows in the same way as the lower bound. Take $\mb Q_t$ to be the law of $\overline\mu_t$ when $(\tau_i)_i$ is a sequence of independent variables with law $\psi$ but for $i=\lfloor\alpha t/\psi(\tau)\rfloor$, for which $\tau_i$ has law $\psi (\,\cdot\, | {\tau} \ge t(1-\alpha)/\ell)$.
Then $\H(\mb Q_t \,|\, \mb P_t) \le -\log \psi ([t(1-\alpha)/\ell,+\infty[)$ and
\begin{equation*}
\varliminf_t \frac{1}t \H(\mb Q_t \,|\, \mb P_t)  \le 
\varliminf_t -\frac 1t \log \psi ([t(1-\alpha)/\ell,+\infty[) = \frac{(1-\alpha)}\ell\xi.
\end{equation*}
On the other hand $\overline\mu_t \rightharpoonup \alpha \gamma + (1-\alpha) \lambda_\ell$ under $\mb Q_t$. Therefore the inequality in \eqref{e:UBsharp} is obtained along a subsequence $(t_k)$ realizing the liminf in the above formula.

\medskip
We prove now the inequality \eqref{e:LBsharp}.
Note that for $t>0$,
\begin{equation*}
 \overline\mu_t = \frac{S_{N_t}}t \sum_{i=1}^{N_t} \frac{\tau(x_i)}{S_{N_t}} \delta_{x_i} \otimes \lambda_1 + \frac{t-S_{N_t}}t \delta_{x_{N_t+1}} \otimes \lambda_{\frac{t-S_{\mc N_{t+1}}}{\tau(x_{N_t+1})}}.
\end{equation*}
Let
\[
\begin{split}
\Omega_1:=\Big\{ &
  \mu(dq,dp)= \alpha\, \pi(dp)\, \lambda_1(dq)
   + (1- \alpha)\, \delta_v(dp) \lambda_\ell(dq) :
  \\ &  \alpha\in \,]0,1[, 
  \, \pi \in \mc P(\R_+), \, \pi(p)<+\infty, \, \ell\in [0,1), \, v\geq 0 \Big\}
\end{split}
\]
and notice that we can define a one-to-one map 
\[
\Omega_1\ni\mu\mapsto (\alpha,\pi,\ell,v) \in
\, ]0,1[\, \times \mc P(\R_+)\times [0,1)\times\R_+.
\]
Moreover, if $\Omega_1\ni\mu_n\rightharpoonup\mu\in\Omega_1$, it is easy to
see that necessarily the associated $(\alpha_n,\pi_n,\ell_n,v_n)$ also converge
to $(\alpha,\pi,\ell,v) \in
\, ]0,1[\, \times \mc P(\R_+)\times [0,1)\times\R_+$, and conversely.
Therefore, 
 there exists $\delta_1>0$ such that
\begin{equation*}
 \left\{\overline\mu_t \in \mc A^{\alpha,\ell}_{\delta} \right\} \subseteq 
 \left\{\Big|\frac{S_{N_t}}t - \alpha\Big| \le \delta_1, \   
\Big|\frac{t-S_{N_t}}{\tau_{N_t+1}} - \ell\Big| \le \delta_1
\right\},
\end{equation*}
so that
\[
\mb P_{t} \left( \mc A^{\alpha,\ell}_{\delta}\right) \leq 
\mb P_{t} \left( S_{N_t}\leq t( \alpha+\delta_1), \, \frac{t-S_{N_t}}{\tau_{N_t+1}} \le 
\ell +  \delta_1\right).
\]
Now, let us calculate for $0\le \beta<1$ and $h \in [0,1[$
\begin{equation*}
\begin{split}
& \bb P\left(S_{N_t} \le \beta t,\,\frac{t-S_{N_t}}{\tau_{N_t+1}} \le h  \right) = 
\sum_{n=1}^{\infty} \bb P\left( S_{n} \le \beta t, \tau_{n+1}\ge \frac{t-S_n}h, N_t=n\right)
\\ & \le  \sum_{n=1}^{\infty} \bb P\left(S_{n} \le \beta t, \, \tau_{n+1} \ge \frac{\left(1-\beta\right)}h t\right)
= \sum_{n=1}^{\infty} \bb P\left(S_{n} \le \beta t\right) \, \bb P\left(\tau_{n+1} \ge \frac{\left(1-\beta\right)}h t\right)
\\ & = \psi \left(\left[\frac{\left(1-\beta\right)}h t,+\infty\right[ \right)\, \sum_{n=1}^{\infty} \bb P\left(S_{n} \le \beta t\right)
 =\psi \left(\left[\frac{\left(1-\beta\right)}h t,+\infty\right[ \right)\,  \bb E \left( N_{\beta t} \right).
\end{split}
\end{equation*}
Therefore, recalling that $\bb E (N_t/t)$ stays bounded, by the definition of $\bar\xi$
\begin{equation*}
\begin{split}
& \varlimsup_t \frac{1}{t} \log \bb P\left(S_{N_t} \le \beta t,\,\frac{t-S_{N_t}}{\tau_{N_t+1}} \le h  \right)
\\ &
\quad \le 
\varlimsup_t \frac{1}{t} \log \psi \left(\left[\frac{\left(1-\beta\right)}h t,+\infty\right[ \right)
+\varlimsup_t \frac{1}{t} \log \bb E \left( N_{\beta t} \right)
= - \frac{\left(1-\beta\right)}h \bar \xi
\end{split}
 \end{equation*}
 so that
 \[
 \varlimsup_t \frac{1}{t} \log \mb P_{t} \left( \mc A^{\alpha,\ell}_{\delta}\right) \leq 
 - \frac{\left(1-\alpha\right)}\ell \bar \xi.
 \]
Therefore \eqref{e:LBsharp} is obtained along a subsequence realizing the limsup in the above fomula.

\section{The Donsker-Varadhan rate functional}

In this section we compute the Donsker-Varadhan (DV) functional associated with the Markov process $(q_t,p_t)_{t\geq 0}$. We first check that this process is indeed Markov, we compute its infinitesimal generator and then we compute the associated DV functional.

\subsection{Markov property}
\label{mmpp} 
The following result is intuitively obvious but still requires a proof.
\begin{prop}\label{markov2}
The process $(q_t,p_t)_{t\geq 0}$ is Markov and $(P_t)_{t\geq 0}$
has the semigroup property: $P_{t+s}=P_tP_s$, $t,s\geq 0$.
\end{prop}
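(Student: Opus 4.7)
The plan is to reduce the Markov property to the strong Markov property of the underlying i.i.d.\ sequence $(v_i)_i$. Fix $s\geq 0$ and define
\[
n(s):=\inf\{n\geq 0\,:\: T_n>s\},
\]
so that $n(s)$ records the index of the free flight containing time $s$: one has $n(s)=0$ on $\{s<T_0\}$ and $n(s)=n$ on $\{T_{n-1}\leq s<T_n\}$ for $n\geq 1$. Setting $\mc G_n:=\sigma(v_1,\ldots,v_n)$ and recalling that $T_0=T_0(q_0,p_0)$ is deterministic once the initial condition is fixed, the event $\{n(s)\leq n\}=\{T_0+\tau_1+\cdots+\tau_n>s\}$ lies in $\mc G_n$, so $n(s)$ is a $(\mc G_n)_n$-stopping time. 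By the standard strong Markov property for i.i.d.\ sequences sampled at discrete stopping times, the shifted sequence $(v_{n(s)+i})_{i\geq 1}$ is then i.i.d.\ with marginal $\phi$ and independent of $\mc G_{n(s)}$.

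The heart of the proof is the pathwise identity
\begin{equation}
\label{e:markovpw}
(q_{s+u},p_{s+u}) = F\bigl(q_s,p_s,u,(1/v_{n(s)+i})_{i\geq 1}\bigr), \qquad u\geq 0,
\end{equation}
where $F$ is the map defined in~\eqref{qp}. This follows by inspection: on $\{u<(1-q_s)/p_s\}$ no collision occurs in $[s,s+u]$ and both sides reduce to $(q_s+p_s u,p_s)$; on the complement one uses the identity $(1-q_s)/p_s=T_{n(s)}-s$ and rewrites every subsequent leg of~\eqref{qp} in terms of the shifted sequence, which amounts to a relabeling of indices. Combining \eqref{e:markovpw} with the fact that $(q_s,p_s)$ is $\mc G_{n(s)}$-measurable while $(v_{n(s)+i})_{i\geq 1}$ is independent of $\mc G_{n(s)}$, standard manipulations with conditional expectations give, for every bounded Borel $f$,
\[
\E\bigl[f(q_{s+t},p_{s+t})\,\big|\,\mc G_{n(s)}\bigr]=P_t f(q_s,p_s) \qquad\bbP\text{-a.s.}
\]
Since the natural filtration $(\mc F_s)_s$ of $(q_\cdot,p_\cdot)$ is contained in $(\mc G_{n(s)})_s$---on $\{n(s)=n\}$ the entire past trajectory is a deterministic function of $(q_0,p_0,v_1,\ldots,v_n)$---a further conditioning yields the Markov property $\E[f(q_{s+t},p_{s+t})\,|\,\mc F_s]=P_t f(q_s,p_s)$. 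The semigroup property $P_{t+s}=P_tP_s$ then follows at once from the tower property:
\[
P_{t+s}f(q,p)=\E_{(q,p)}\bigl[\E[f(q_{t+s},p_{t+s})\,|\,\mc F_s]\bigr]=\E_{(q,p)}\bigl[P_t f(q_s,p_s)\bigr]=P_s(P_t f)(q,p).
\]

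The only non-routine step I anticipate is the case analysis proving~\eqref{e:markovpw}; once this pathwise identity is in hand, everything else reduces to standard facts about conditional expectations and i.i.d.\ sequences sampled at discrete stopping times. No serious analytic obstacle is expected, only careful bookkeeping of indices across collision times.
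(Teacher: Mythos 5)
Your proof is correct and follows essentially the same route as the paper's: both rest on the pathwise identity $(q_{s+u},p_{s+u})=F(q_s,p_s,u,(\tau_{n(s)+i})_i)$ together with the fact that the innovations after the stopping time $n(s)$ are an i.i.d.\ copy of the original sequence, independent of the stopped $\sigma$-algebra. The only cosmetic difference is that you invoke the strong Markov property of i.i.d.\ sequences at discrete stopping times as a known general principle, whereas the paper verifies it directly by the short computation with indicator functions; the substance is identical.
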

\begin{proof}
We define the number of collisions before time $t$
\[
n_t=n_t(q,p,(\tau_n)_{n\geq 1}) := \sum_{n=0}^\infty \un{(T_n\leq t)} = \min\{k\geq 1:
T_k>t \}.
\]
Then we can rewrite
\[
(q_t,p_t) = F(q,p,t,(\tau_n)_{n\geq 1}) :=
\left\{
\begin{array}{ll}
(q_0+p_0t,p_0) \qquad {\rm if} \quad  n_t=0,
\\ \\
\left( \frac{t-T_{n-1}}{\tau_n},\frac 1{\tau_n}
\right)  \quad {\rm if} \quad
 n_t=n\geq 1.
\end{array}
\right.
\]
We claim now that
\begin{equation}\label{iteration}
(q_{t+s},p_{t+s}) = F(q_t,p_t,s,(\tau_{n+n_t})_{n\geq 1}), \qquad \forall \
t,s\geq 0.
\end{equation}
The verification of \eqref{iteration} is a trivial and tedious computation,
where one needs to distinguish the four following
possible situations: $(t+s< T_0)$, $(t< T_0\leq t+s)$, $(T_0\leq t, s<\hat T_0)$ and $(T_0\leq t, \hat T_0\leq s)$. 
We omit the details.

Let us now $\cF_t:=\sigma((q_s,p_s): s\leq t)=\sigma(n_t, \tau_1,
\ldots,\tau_{n_t})$.
Conditionally on $\cF_t$, $(\tau_{n+n_t})_{n\geq 1}$
has the same law as $(\tau_{n})_{n\geq 1}$, since
\[
\begin{split}
& \E(\un{(n_t=k, \, \tau_i\leq x_i, \, i\leq k)} \, f(\tau_{n_t+1}, \tau_{n_t+2},
\ldots)) =
\E(\un{(T_{k-1}\leq t< T_k, \, \tau_i\leq x_i, \, i\leq k)}
\, f(\tau_{k+1},\tau_{k+2},\ldots ))
\\ & = \bbP(T_{k-1}\leq t<T_k, \, \tau_i\leq x_i, \, i\leq k) \,
\E(f(\tau_{k+1}, \ldots))
\\ &  = \bbP(n_t=k, \, \tau_i\leq x_i, \, i\leq k)\, \E(f(\tau_1, \ldots)).
\end{split}
\]
It follows immediately that $(q_t,p_t)$ is a Markov process and
the family of operators $P_tf(q,p):=\E(f(F(q,p,t,(\tau_{n})_{n})))$,
for all bounded Borel function, is a semigroup. Indeed, the conditional
law of $(q_{t+s},p_{t+s})_{s\geq 0}$ given $\cF_t$ is the law
of $F(q_t,p_t,s,(\hat \tau_{n})_{n})$.
\end{proof}

\subsection{The generator}

We want to compute the infinitesimal generator $(L,D(L))$ of the process $(q_t,p_t,t\geq 0)$, in the following weak sense: we say that $f\in D(L)$ if $f:[0,1]\times\R_+\mapsto\R$ is bounded continuous and there exists a bounded continuous $Lf:[0,1]\times\R_+\mapsto\R$ such that
\[
P_tf(q,p)=f(q,p)+ \int_0^t P_s Lf(q,p)\, ds, \qquad \forall \, t\geq 0, \, (q,p)\in
[0,1]\times\R_+.
\]
Let us show that 
\begin{prop}\label{infinitesimal}
The domain $D(L)$ of $L$ is equal to the set of bounded continuous $f:[0,1]\times\R_+\mapsto\R$ such that $(q,p)\mapsto p\frac{\partial f}{\partial q}$ is bounded continuous and
\begin{equation}\label{boucon}
f(1,p) = \int_{\R_+} f(0,1/\tau)\, \psi(d\tau), \qquad \forall \, p\in\R_+,
\end{equation}
and in this case $Lf=p\frac{\partial f}{\partial q}$.
\end{prop}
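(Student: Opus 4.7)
The plan is to verify the two inclusions between $D(L)$ and the claimed set, exploiting the piecewise-deterministic structure of the process.

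For the inclusion $\supseteq$, I would take $f$ bounded continuous with $(q,p)\mapsto p\partial_q f(q,p)$ bounded continuous and $f(1,\cdot)\equiv\int f(0,1/\tau)\,\psi(d\tau)$. Between successive collisions $T_{n-1}\le t<T_n$ the momentum $p_s$ is constant and $\dot q_s=p_s$, so the fundamental theorem of calculus applied on each excursion, telescoped over $n\le n_t$, gives
\begin{equation*}
f(q_t,p_t)-f(q_0,p_0)=\int_0^t p_s\,\partial_q f(q_s,p_s)\,ds+\sum_{k=0}^{n_t-1}\bigl[f(0,v_{k+1})-f(1,v_k)\bigr],
\end{equation*}
with the convention $v_0:=p_0$. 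Rewriting the random sum as $\sum_{k\ge 0}\un{T_k\le t}\bigl[f(0,v_{k+1})-f(1,v_k)\bigr]$ and using that $\{T_k\le t\}\in\cF_{T_k}=\sigma(q_0,p_0,v_1,\dots,v_k)$ while $v_{k+1}$ is independent with law $\phi$, conditioning on $\cF_{T_k}$ collapses each summand to $\un{T_k\le t}\bigl[\int f(0,1/\tau)\,\psi(d\tau)-f(1,v_k)\bigr]$, which vanishes by the boundary condition applied at $p=v_k$. Taking the full expectation with Fubini (legitimate because $f$ and $p\partial_q f$ are bounded) yields $P_tf=f+\int_0^tP_s(p\partial_q f)\,ds$, so $f\in D(L)$ and $Lf=p\partial_q f$.

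For the inclusion $\subseteq$, I would take $f\in D(L)$ and first note that bounded convergence combined with the right-continuity $(q_s,p_s)\to(q,p)$ a.s.\ as $s\downarrow 0$ gives $P_sLf(q,p)\to Lf(q,p)$. To extract the boundary condition I would evaluate the defining identity at $(1,p)$: since $T_0=0$ here, for $t<\tau_1$ one has $(q_t,p_t)=(tv_1,v_1)\to(0,v_1)$ a.s., so bounded convergence yields $P_tf(1,p)\to\int f(0,1/\tau)\,\psi(d\tau)$, while the integral identity forces $P_tf(1,p)=f(1,p)+O(t)\to f(1,p)$; comparison gives $f(1,p)=\int f(0,1/\tau)\,\psi(d\tau)$. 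To identify $Lf$ with $p\partial_q f$, I would restrict to $q\in[0,1)$ and $p>0$: for $s<T_0=(1-q)/p$ the trajectory is deterministic, $(q_s,p_s)=(q+ps,p)$, so $P_sLf(q,p)=Lf(q+ps,p)$ and the integral identity becomes
\begin{equation*}
f(q+pt,p)-f(q,p)=p^{-1}\int_0^{pt}Lf(q+u,p)\,du,\qquad t\in[0,T_0).
\end{equation*}
Continuity of $Lf$ in its first variable identifies $u\mapsto f(q+u,p)$ as an antiderivative of $u\mapsto p^{-1}Lf(q+u,p)$ on $[0,1-q)$; hence $\partial_q f$ exists on $[0,1)\times(0,+\infty)$ with $p\partial_q f=Lf$, which is bounded continuous by hypothesis on $Lf$.

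The main obstacle will be the termwise vanishing of the expected random sum of jump increments in the $\supseteq$ direction. Everything pivots on the fact that $\{k<n_t\}=\{T_k\le t\}\in\cF_{T_k}$ together with the independence and $\phi$-law of $v_{k+1}$, which reduces each conditional expected jump to $\int f(0,1/\tau)\,\psi(d\tau)-f(1,v_k)$, an expression the boundary condition kills exactly. After that, recovering $\partial_q f$ from the deterministic phase $[0,T_0)$ and deducing the boundary condition from the instantaneous jump at $q=1$ are routine manipulations of the integral identity.
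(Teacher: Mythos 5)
Your proof is correct, and it takes a genuinely different and more probabilistic route than the paper's. The paper writes $P_tf$ explicitly in terms of the renewal measure $U(ds)=\sum_n\psi^{*n}(ds)$, tests against a bounded continuous $g$, differentiates the resulting integrals $I_1(t),I_2(t)$ in $t$, and exploits the renewal identity $\psi*U=U-\delta_0$ together with the boundary condition to cancel the boundary terms; necessity is then extracted by letting $t\downarrow 0$ and varying $g$. You instead use the piecewise-deterministic structure directly: the pathwise telescoping identity $f(q_t,p_t)-f(q_0,p_0)=\int_0^tp_s\partial_qf(q_s,p_s)\,ds+\sum_{k:T_k\le t}[f(0,v_{k+1})-f(1,v_k)]$, with $v_0=p_0$, then conditioning on $\cF_{T_k}$ (noting $\{T_k\le t\}\in\cF_{T_k}$ and $v_{k+1}\perp\cF_{T_k}$ with law $\phi$) to reduce each expected jump to $\int f(0,1/\tau)\psi(d\tau)-f(1,v_k)$, which the boundary condition annihilates; the Fubini step is justified because $\sum_k\bbP(T_k\le t)=\bbE[\#\{k:T_k\le t\}]<\infty$. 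For necessity you evaluate at $q=1$ (where $T_0=0$) and use right-continuity to force the boundary condition, and you recover $Lf=p\partial_qf$ by restricting the defining identity to $t<T_0(q,p)$, where the flow is purely deterministic. This is essentially the Dynkin-formula route standard for PDMPs, and it is conceptually cleaner than the paper's renewal-measure computation; the paper's approach has the mild advantage of producing the generator identity in the weak (tested) form directly, which is useful for the Donsker--Varadhan computation that follows, but your argument reaches the same conclusion with less bookkeeping.
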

\begin{proof} We recall that we denote
the law of $\tau_i$ by $\psi(d\tau)$ and the law of $v_i=1/\tau_i$
by $\phi(du)$.  The law of $S_n:=\tau_1+\cdots+\tau_n$ is denoted
as usual by the $n$-fold convolution $\psi^{n*}$ and we recall that
$T_n=T_0+S_n$.
Then we can write
\[
\begin{split}
& P_tf(q_0,p_0)=
\\ & = \un{(t<T_0 )} \, f(q_0+p_0t,p_0)+ \un{(t\geq T_0)} \, \sum_{n=1}^\infty
\E\left(\un{(T_{n-1}\leq t< T_n)} \,
f\left(\frac{t-T_{n-1}}{\tau_n}, \frac1{\tau_n}\right)\right)
\\ & = \un{(t< T_0)} \, f(q_0+p_0t,p_0)+ \\
& \qquad \qquad + \un{(t\geq T_0)} \, \sum_{n=1}^\infty
\int_{[0,t-T_0]} \psi^{*(n-1)}(ds) \int_{]t-T_0-s,+\infty[} \psi(d\tau)\,
f\left(\frac{t-T_0-s}{\tau}, \frac1{\tau}\right)
\\ & = \un{(t< T_0)} f(q_0+p_0t,p_0)+ \un{(t\geq T_0)}
\int_{[0,t-T_0]} U(ds) \int_{]t-T_0-s,+\infty[} \psi(d\tau)\,
f\left(\frac{t-T_0-s}{\tau}, \frac1{\tau}\right)
\end{split}
\]
where we recall that $T_n=T_0(q,p)+\tau_1+\cdots+\tau_n$ and we set
$\psi^{*0}(ds)=\delta_0(ds)$ and
\[
U([a,b]) =\sum_{n=1}^\infty \int_a^b \psi^{*(n-1)}(ds)
= \delta_0([a,b]) + \sum_{n=1}^\infty \int_a^b \psi^{*n}(ds), \quad
0\leq a\leq b.
\]
The {\it renewal measure} $U(ds)$ gives the average number of
collisions in the time interval $ds$. %
Let now $g:[0,1]\times\,]0,+\infty[\, \mapsto\R$ bounded and continuous.
We define 
\[
\begin{split}
I_1(t) & := \int_{[0,1[\times\R_+} dp\, dq\, g(q,p)
\, \un{(t<T_0(q,p))} \, P_tf(q,p) \\ &
= \int_{\R_+} dp \int_0^{(1-tp)^+} dq \, g(q,p) \, f(q+tp,p),
\end{split}
\]
\[
\begin{split}
& I_2(t)  :=\int_{[0,1[\times\R_+} dp\, dq\, g(q,p)
\, \un{(t\geq T_0(q,p))}\, P_tf(q,p)
\\ & = \int_{[0,1[\times\R_+} dp\, dq\, g(q,p) \, \un{(q\geq 1-tp)}
 \int_{[0,t-\frac{1-q}p]} \, U(ds) \cdot
 \\ & \qquad \cdot
\int_{]t-\frac{1-q}p-s,+\infty[} \psi(d\tau)\,
f\left(\frac{t-\frac{1-q}p-s}{\tau}, \frac1{\tau}\right)
\\ & = \int_{\R_+^3} \psi(d\tau) \ U(ds) \ dp
\int_{1\wedge(1+(s-t)p)^+}^{1\wedge(1+(s-t+\tau)p)^+} dq \, g(q,p) \,
f\left(\frac{t-\frac{1-q}p-s}{\tau}, \frac1{\tau}\right).
\end{split}
\]
Let us take the derivative in $t$
\[
\begin{split}
\dot I_1(t) & =
\frac d{dt} \, I_1(t) = \int_0^{\frac 1t} dp \,
p\,\left[\int_0^{1-tp} dq \, g(q,p)\, f_q(q+tp,p)-
g(1-tp,p)\, f(1,p) \right]
\\ & = \int_{[0,1[\times\R_+} dp\, dq\, g
\, \un{(t< T_0)}\, P_tLf\, - \int_{\R_+} dp\, g(1-tp,p)
\, \un{(1-tp\geq 0)}\, f(1,p),
\end{split}
\]
\[
\begin{split}
& \dot I_2(t)=
\frac d{dt} \, I_2(t) =
\\ & = \int_{\R_+^3} \psi(d\tau) \ U(ds) \ dp 
\int_{1\wedge(1+(s-t)p)^+}^{1\wedge(1+(s-t+\tau)p)^+}  dq \,
\frac 1\tau\, g(q,p) \,f_q\left(\frac{t-\frac{1-q}p-s}{\tau}, \frac1{\tau}\right)
\\ & \quad +  \int_{\R_+^3} \psi(d\tau) \ U(ds) \ dp \, p\, \un{(0\leq 1+(s-t)p\leq 1)} \, g(1+(s-t)p,p) \,
f\left(0, {\tau}^{-1}\right)
\\ & \quad -  \int_{\R_+^3} \psi(d\tau) \ U(ds) \ dp \, p\, \un{(0\leq 1+(s-t+\tau)p\leq 1)} \, g(1+(s-t+\tau)p,p) \,
f\left(1, {\tau}^{-1}\right).
\end{split}
\]
Since $\psi* U = U-\delta_0$, if $f$ satisfies the boundary condition \eqref{boucon} above, the last term is equal to
\[
\begin{split}
& - \int_{\R_+^2} U(ds) \ dp \, p\, \un{(0\leq 1+(s-t)p\leq 1)} \, g(1+(s-t)p,p) \,
\int_{\R_+} \psi(d\tau) \, f\left(0, {\tau}^{-1}\right) \\ & +
\int_{\R_+} dp \, p\, \un{(0\leq 1-tp)} \, g(1-tp,p) \,
\int_{\R_+} \psi(d\tau) \, f\left(0, {\tau}^{-1}\right).
\end{split}
\]
By summing all terms, we obtain that for $f$ satisfying \eqref{boucon} 
\[
\begin{split}
& \int _{[0,1[\times\R_+} g
\, \left(P_tf-f\right)  dp\, dq = \int _{[0,1[\times\R_+} g\, \int_0^t P_s Lf\, ds
\,  dp\, dq.
\end{split}
\]
On the other hand, if $f\in D(L)$ then we must have $\dot I_1(t)+\dot I_2(t)\to \int g \, Lf \, dq \, dp$ as $t\to 0$. Now
\[
\dot I_1(0+) = \int_{\R_+} dp \, p\,\left[\int_0^1 dq
\, g(q,p)\, f_q(q,p)- g(1,p)\, f(1,p) \right]
\]
and since $U(ds)=\delta_0(ds)+\un{]0,+\infty[}(s)\, U(ds)$
\[
\dot I_2(0+) = \int_{\R_+} dp \, p\, g(1,p)
\int_{\R_+} \psi(d\tau)\,  f(0,\tau^{-1})
.
\]
Therefore
\[
\begin{split}
& \int_{[0,1[\times\R_+} g \, Lf \, dq \, dp =
\\ & = \int_{\R_+} dp \, \int_0^1 dq \, g(q,p)\, p \,
f_q(q,p) + \int_{\R_+} dp \, p\, g(1,p) \int_{\R_+}
\psi(d\tau)\, (f(0,\tau^{-1})-f(1,p)).
\end{split}
\]
If this is true for all bounded continuous $g$, then $f$ must satisfy \eqref{boucon} above.
\end{proof}

\subsection{Donsker-Varadhan}

 We want to compute,
\[
I(\mu)=\sup_{g\in D(L), g>0}(-\langle g^{-1}L g\rangle_\mu)
\]
in the case of the basic dynamics on the interval $[0,1]$ for which the ``generator" $L$ is given by
\[
Lg(q,p) = p\, \frac{\partial g}{\partial q}, \qquad g(1,\cdot)=
\int_{\R_+} \psi(d\tau)\, g(0,1/\tau).
\]
Assume that $\mu$ is given by a density $f$ wrt Lebesgue on $\R_+$.  We write first,
\[
\langle g^{-1}L g\rangle_\mu=-\int d\mu\, p\, \frac{\partial }{\partial q}\log g.
\]
Then, assuming that 
\[
\sup_{g\in D(L), g>0}(-\langle g^{-1}L g\rangle_\mu)=\sup_{g\in D(L), g>0}
-\int d\mu\, p\, \frac{\partial }{\partial q}\log g<+\infty,
\]
easily implies that $\mu(dq,dp)= dq\, f(dp)$. Then we obtain
\[
\begin{split}
-\langle g^{-1}L g\rangle_\mu=& \int f(dp)\; p \,\log\frac{g(0,p)}{g(1,p)}
\\ =& \int f(dp)\; p \,\log \, g(0,p) - \int f(dp)\; p \, \log\left(\int_{\R_+} \phi(du)\, g(0,u) \right)
\\ =& \mu(p\log g) - \mu(p)\log(\phi(g)) = \mu(p)\left( \tilde\mu(h)-\log(\phi(e^h))\right)
\end{split}
\]
where $g:=e^h$. By Lemma~\ref{algfact}, we can conclude.


\begin{thebibliography}{63}
\bibitem{asmussen} S. Asmussen,
\emph{Applied Probability and Queues}, Second Edition,

\bibitem{davis} Davis, M. H. A. {\it Markov models and optimization}. Monographs on Statistics and Applied Probability, 49. Chapman \& Hall, London, (1993).


\bibitem{demzei} A. Dembo, O. Zeitouni, {\it Large deviations techniques
and applications}, Springer Verlag.



\bibitem{dm1} K. Duffy, M. Rodgers-Lee, {\it Some useful functions for functional large deviations},  Stoch. Stoch. Rep.  76  (2004),  no. 3, 267--279.

\bibitem{DV} Donsker M. D., Varadhan, S. R. S., \textit{Asymptotic
    evaluation of certain Markov process expectations for large
    time. I. II.}  Comm. Pure Appl. Math.  \textbf{28} (1975), 1--47; ibid. \textbf{28}
    (1975), 279--301.

\bibitem{EckmannYoung} J.P. Eckmann and L.S. Young, {\it Temperature profiles in Hamiltonian heat conduction}, Europhys. Lett. 68 790-796 (2004).

\bibitem{EckmannYoung2} J.P. Eckmann and L.S. Young, {\it Nonequilibrium Energy Profiles for a Class of 1-D Models}, Commun. Math. Phys. 262, 237-267 (2006).


\bibitem{faga} Faggionato A., Gabrielli D., Crivellari M. R., {\it Averaging and large deviation principles for fully-coupled piecewise deterministic Markov processes and applications to molecular motors}. Markov Process. Related Fields 16 (2010), no. 3, 497-548.

\bibitem{jaco} Jacobsen M., {\it Point process theory and applications. Marked point and piecewise deterministic processes}. Probability and its Applications. Birkhäuser Boston, Inc., Boston, MA, (2006).



\bibitem{KL} Kipnis C., Landim C., Scaling limits of
  interacting particle systems. Springer-Verlag, Berlin (1999).

\bibitem{Larralde} H. Larralde, F. Leyvraz, C. Mejia-Monasterio,
{\it Transport properties of a modified Lorentz gas,}
Journal of Statistical Physics 113, 197 (2003).

\bibitem{LefevereZambotti1}  R. Lefevere and L. Zambotti, {\it Hot scatterers and tracers for the transfer of heat in collisional dynamics},  Journal Statistical Physics  (2010) 139, 686-713.

\bibitem{LMZ1} R. Lefevere, M. Mariani, L. Zambotti, \textit{Large deviations of the current in stochastic collisional dynamics}, to appear in the Journal of Mathematical Physics, available at \texttt{http://arxiv.org/abs/1007.2664}.

\bibitem{LMZ2} R. Lefevere, M. Mariani and L. Zambotti, {\it Macroscopic fluctuations theory of aerogel dynamics}, to appear in Journal of Statistical Mechanics, available at http://arxiv.org/abs/1007.3080

\bibitem{LMZ3} R. Lefevere, M. Mariani and L. Zambotti, {\it Large deviations for renewal processes}, available at http://arxiv.org/abs/1009.2659

\bibitem{LinYoung} K. K. Lin and L.S. Young,  {\it Nonequilibrium steady states for certain Hamiltonian models}, Journal Statistical Physics (2010) 139, 630-657.

\bibitem {Mejia} C. Mejia-Monasterio, H. Larralde, F. Leyvraz,
{\it Coupled normal heat and matter transport in a simple model system},
Physical Review Letters 86, 5417 (2001).
\end{thebibliography}
\end{document}